\newtheorem{theorem}{Theorem}[section]
\newtheorem{corollary}{Corollary}[section]
\newtheorem{lemma}{Lemma}[section]
\newtheorem{proposition}{Proposition}[section]
\newtheorem{assumption}{Assumption}[section]
\newenvironment{proof}[1][Proof]{\textbf{#1.} }{\hfill$\Box$}
\newcommand{\R}{\mathbb{R}}
\newcommand{\T}{\mathrm{T}}
\newcommand{\cS}{{\cal S}}
\newcommand{\cU}{{\cal U}}
\newcommand{\epsg}{\epsilon_g}
\newcommand{\epsR}{\epsilon_R}
\newcommand{\cO}{{\cal O}}
\DeclareMathOperator*{\minimize}{\mathrm{minimize}}
\def \st {\operatorname*{subject\ to\ }}
\newcommand{\rev}[1]{\textcolor{black}{#1}}
\newcommand{\revrev}[1]{\textcolor{black}{#1}}
\title{Complexity analysis of regularization methods 
for implicitly constrained least squares}
\author{Akwum Onwunta\thanks{Department of Industrial and Systems Engineering,
Lehigh University,
200 West Packer Avenue, Bethlehem, PA 18015-1582, USA
(\texttt{ako221@lehigh.edu}).} 
\and 
Cl\'ement W. Royer\thanks{LAMSADE, CNRS, Universit\'e Paris Dauphine-PSL, 
Place du Mar\'echal de Lattre de Tassigny, 75016 Paris, France 
(\texttt{clement.royer@lamsade.dauphine.fr}).}
}
\begin{document}

\maketitle

\begin{abstract}
	Optimization problems constrained by partial differential equations 
	(PDEs) naturally arise in scientific computing, as those constraints 
	often model physical systems or the simulation thereof. In an implicitly 
	constrained approach, the constraints are incorporated into the objective 
	through a reduced formulation. To this end, a numerical procedure is 
	typically applied to solve the constraint system, and efficient numerical 
	routines with quantifiable cost have long been developed for that 
	purpose. Meanwhile, the field of complexity in optimization, that estimates 
	the cost of an optimization algorithm, has received significant attention in 
	the literature, with most of the focus being on unconstrained or 
	explicitly constrained problems.
		In this paper, we analyze an algorithmic framework based on quadratic 
	regularization for implicitly constrained nonlinear least squares. By 
	leveraging adjoint formulations, we can quantify the worst-case cost 
	of our method to reach an approximate stationary point of the optimization 
	problem. Our definition of such points exploits the least-squares structure 
	of the objective, and provides new complexity insights even in the 
	unconstrained setting. Numerical experiments conducted on PDE-constrained 
	optimization problems demonstrate the efficiency of the proposed framework.
\end{abstract}

\section{Introduction}
\label{sec:intro}


PDE-constrained optimization problems arise in various scientific and 
engineering fields \rev{when searching for the optimal distribution of 
a given quantity that satisfies} physical or mathematical laws described by 
PDEs, such as heat conduction or electromagnetic 
waves~\cite{HAntil_DPKouri_MDLacasse_DRidzal_2016EDS,Fredi_2010,
MHinze_RPinnau_MUlbrich_SUlbrich_2009, MUlbrich_2011}. Similar constrained 
formulations have also received recent interest from the machine learning 
community, as they opened new possibilities for building neural network 
architectures~\cite{LRuthotto_EHaber_2020}. A popular approach to handle 
PDE constraints is the so-called reduced formulation, in which the constraints 
are incorporated into the objective and become \emph{implicit}. By properly 
accounting for the presence of these constraints while computing derivatives, 
it becomes possible to generalize unconstrained optimization techniques to 
the implicitly constrained setting~\cite{Fredi_2010}. 
\revrev{This paradigm resembles that of manifold 
optimization~\cite{NBoumal_2023}, a field of study that has gained significant 
attention over the past decade and that produced the so-called \emph{implicit} 
methods by accounting for constraints through Riemannian geometry.}

\revrev{
Popular algorithms for both PDE-constrained and manifold optimization are 
based on variants of the trust-region 
method~\cite{MHeinkenschloss_DRidzal_2014,CGBaker_PAAbsil_KAGallivan_2008}. 
Although these algorithms can be equipped with theoretical guarantees and 
efficiently implemented, they are typically designed with general, 
nonlinear objective functions and constraints in mind. Moreover, the cases 
in which these methods are used often involve least-squares objectives, e.g. 
when those problems amount to fitting a model to some observations or data. 
Thus, one naturally wonders whether the least-squares structure can be 
exploited in order to design algorithms tailored to these problems. In 
the context of PDE-constrained optimization problems with explicit constraints, 
methods tailored to least-squares formulations such as Gauss-Newton  
techniques~\cite{JNocedal_SJWright_2006}, have shown promising performance 
when compared to generic trust-region 
approaches~\cite{EBergou_YDiouane_VKungurtsev_CWRoyer_2021}. Still, it is 
unclear whether exploiting this structure can be beneficial in an 
implicitly-constrained setting.
}

\revrev{Meanwhile, \emph{worst-case complexity} has emerged as a way to 
analyze algorithmic performance over the past decade, especially in the 
nonconvex optimization community~\cite{CCartis_NIMGould_PhLToint_2022}.} A 
complexity bound characterizes the worst-case performance of a given 
optimization scheme according to a performance metric (e.g., number of 
iterations, derivative evaluations, etc) and a stopping criterion (e.g., 
approximate optimality, predefined budget, etc). Recent progress in the area 
has switched from designing optimization techniques with complexity guarantees 
in mind to studying popular algorithmic frameworks through the prism of 
complexity, with several results focusing on the least-squares 
setting~\cite{EBergou_YDiouane_VKungurtsev_2020,
EBergou_YDiouane_VKungurtsev_CWRoyer_2022,CCartis_NIMGould_PhLToint_2013d,
NIMGould_TRees_JAScott_2017}, \revrev{as well as manifold optimization 
algorithms~\cite{AAgarwal_NBoumal_BBullins_CCartis_2021,NBoumal_2023}. Despite 
these recent advances, complexity results remain unexplored in PDE-constrained 
optimization, particularly in implicitly-constrained optimization problems.}

In this paper, we study an algorithmic framework for least-squares problems 
with implicit constraints. Our approach leverages the particular structure 
of the objective in order to compute derivatives \revrev{through a carefully 
designed adjoint equation}.  Under standard assumptions for this 
class of methods, we establish complexity guarantees for our framework. In 
a departure from standard literature, our analysis is based on a recently 
proposed stationarity criterion for least-squares 
problems~\cite{CCartis_NIMGould_PhLToint_2013d}. To the best of our knowledge, 
these results are the first of their kind for implicitly constrained problems. 
In addition, our complexity results improve over bounds recently obtained 
in the unconstrained setting~\cite{EBergou_YDiouane_VKungurtsev_CWRoyer_2022}, 
thereby advancing our understanding of complexity guarantees for 
least-squares problems. Numerical experiments on PDE-constrained problems 
illustrate the practical relevance of the proposed stationarity criterion, 
and show that our framework handles both small and large residual problems, 
as well as nonlinearity in the implicit constraints.

The rest of this paper is organized as follows. In Section~\ref{sec:setup}, 
we present our formulation of interest, and discuss how its least-squares 
structure is used to design our algorithmic framework. We establish complexity 
guarantees for several instances of our proposed method in 
Section~\ref{sec:wcc}. In Section~\ref{sec:num}, we investigate the performance 
of our algorithm on classical benchmark problems from PDE-constrained 
optimization. We finally summarize our work in Section~\ref{sec:conc}.

\section{Least-squares optimization with implicit constraints}
\label{sec:setup}

In this paper, we discuss algorithms for least-squares problems of the form
\begin{equation}
\label{eq:genpb}
	\min_{u \in \R^{\rev{n}}} J(y,u):=\frac{1}{2}\|R(y,u)\|^2 
	\quad \st \quad c(y,u) = 0,
\end{equation}
\rev{where $R: \R^{n_y} \times \R^n \rightarrow \R^m$ is a vector-valued 
function, $\|\cdot\|$ denotes the Euclidean vector norm}
\footnote{\rev{Throughout this paper, we use $\|\cdot\|$ for both the vector 
Euclidean norm and the induced spectral norm on matrices.}}
\rev{
and $c: \R^{n_y} \times \R^n \rightarrow \R^p$.
}
\rev{Problem~\eqref{eq:genpb} involves both the variable $u \in \R^n$ (typically 
representing a control on a 
given system) as well as a vector of auxiliary variables $y \in \R^{n_y}$} 
(often reflecting the state of the system). We are interested in problems where it is possible to 
(numerically) solve the constraint equation $c(y,u)=0$ to obtain a unique 
solution $y$ given $u$. Problem~\eqref{eq:genpb} can then be reformulated as
\begin{equation}
\label{eq:reducedpb}
	\min_{u \in \R^n} J(y(u),u) = \frac{1}{2}\|R(y(u),u)\|^2,
\end{equation}
where the constraint arises implicitly in the 
formulation~\cite{MHeinkenschloss_2018}. In PDE-constrained 
optimization, the constraint is a PDE, that can be solved given 
a value for the control vector $u$ to yield a state vector $y(u)$. In 
that setting, problem~\eqref{eq:reducedpb} is often called the 
\emph{reduced formulation}~\cite[Chapter 1]{Fredi_2010}. We are particularly 
interested in leveraging the least-squares nature of problem~\eqref{eq:genpb}.
To this end, we describe in Section~\ref{subsec:adjoints} how derivatives can 
be computed by the adjoint approach for problem~\eqref{eq:reducedpb} while 
leveraging the problem structure. Our algorithm is then given in 
Section~\ref{subsec:algo}.

\subsection{\rev{Adjoint formula} for a least-squares problem}
\label{subsec:adjoints}

In this section, we derive an adjoint formula associated with the reduced 
formulation~\eqref{eq:reducedpb}. Even though the analysis relies on standard 
arguments, to the best of our knowledge the formulas for the least-squares 
setting are rather unusual in the literature. We believe that they may be of 
independent interest, and therefore we provide the full derivation below.

To this end, we make the following assumption on our problem, which is a 
simplified version of a standard requirement in 
implicitly constrained problems~\cite{MHeinkenschloss_2018}. 
\rev{Similar assumptions are found in the PDE-constrained optimization 
literature, see e.g.,~\cite{HAntil_DPKouri_MDLacasse_DRidzal_2016EDS,
MHinze_RPinnau_MUlbrich_SUlbrich_2009}.}

\begin{assumption}
\label{as:reduced}
	For any $u \in \R^{\rev{n}}$, the following properties hold.
	\begin{enumerate}[(i)]
		\item There exists a unique vector $y(u) \rev{\in \R^{n_y}}$ such that $c(y,u)=0$. 
		\item The functions $J$ and $c$ are twice continuously 
		differentiable.
		\item The Jacobian of $c$ with respect to its first argument, 
		denoted by $c_y(\cdot,\cdot)$, is invertible at any $(y,u)$ such that 
		$c(y,u)=0$.
	\end{enumerate}
\end{assumption}

\rev{We now introduce derivative notations for the rest of the paper. We let 
$\hat{J}(u):=J(y(u),u)$ and $\hat{R}(u) := R(y(u),u)$ denote the reduced 
objective function of~\eqref{eq:reducedpb} and its associated residual 
function, respectively. For any $u \in \R^n$, we let 
$\nabla \hat{J}(u) \in \R^n$ denote the gradient of $\hat{J}$ with respect 
to $u$, while for any pair $(y,u) \in \R^{n_y} \times \R^n$, we let 
$\nabla_y J(y,u) \in \R^{n_y}$ and $\nabla_u J(y,u) \in \R^n$ denote 
the partial gradients of $J$ at $(y,u)$ with respect to its first and 
second arguments, respectively.}

\rev{It then follows} from the chain rule that
\begin{equation}
\label{eq:gradhatJu}
	\nabla \hat{J}(u) = \nabla_u J(y(u),u) + c_u(y(u),u)^\T \lambda(u),
\end{equation}
where $\lambda(u)$ is a solution of the so-called adjoint equation
\[
	c_y(y(u),u)^\T \lambda = -\nabla_y J(y(u),u).
\]
This observation is at the heart of the adjoint method. In our case, 
we can leverage the least-squares \rev{nature} of our problem to 
decompose~\eqref{eq:gradhatJu} further. Indeed, the derivatives of 
$J$ with respect to its first and second arguments are given by
\begin{equation}
\label{eq:leastsquaresder}
	\left\{
		\begin{array}{lll}
			\nabla_y J(y(u),u) &= &G_y(y(u),u)^\T R(y(u),u) \\
			\nabla_u J(y(u),u) &= &G_u(y(u),u)^\T R(y(u),u), \\
		\end{array}
	\right.
\end{equation}
where $G_y(y,u)$ and $G_u(y,u)$ are the Jacobian matrices of \rev{$R$} with respect to $y$ 
and $u$\rev{, respectively}. Plugging these expressions in~\eqref{eq:gradhatJu}, we obtain
\begin{eqnarray*}
	\nabla \hat{J}(u) &= &\nabla_u J(y(u),u) 
+ c_u(y(u),u)^\T \lambda(u) \\
	&= &G_u(y(u),u)^\T R(y(u),u) + c_u(y(u),u)^\T \lambda(u) \\
	&= &G_u(y(u),u)^\T R(y(u),u) 
	- c_u(y(u),u)^\T \left[c_y(y(u),u)^\T\right]^{\dagger} \nabla_y J(y(u),u) \\
	&= &G_u(y(u),u)^\T R(y(u),u) 
	- c_u(y(u),u)^\T \left[c_y(y(u),u)^\T\right]^{\dagger}  G_y(y(u),u)^\T R(y(u),u) \\
	&= &\left[ G_u(y(u),u) - G_y(y(u),u)  c_y(y(u),u)^{\dagger} c_u(y(u),u) \right]^\T R(y(u),u) \\
	&= &\left[ G_u(y(u),u) - G_y(y(u),u)  c_y(y(u),u)^{\dagger} c_u(y(u),u) \right]^\T R(y(u),u),
\end{eqnarray*}
where $A^{\dagger}$ denotes the Moore-Penrose pseudo-inverse\rev{.}
According to this expression, we can identify the Jacobian of \rev{$\hat{R}$ at $u$}, 
which we denote by $\hat{G}(u)$, as
\begin{equation}
\label{eq:hatG}
	\hat{G}(u) := \rev{G_u(y(u),u) - G_y(y(u),u)}  c_y(y(u),u)^{\dagger} c_u(y(u),u).
\end{equation}

Algorithm~\ref{alg:jacLS} summarizes the analysis below, and describes the 
adjoint method to compute derivative information when the objective has a 
least-squares structure.

\begin{algorithm}[ht!]
\caption{Adjoint equations for implicitly constrained least squares}
\label{alg:jacLS}
\begin{algorithmic}[1]
\REQUIRE Point of interest \rev{$u \in \R^n$ and vector $y(u) \in \R^{n_y}$} 
such that $c(y(u),u)=0$.
\STATE Solve the equation
\begin{equation}
\label{eq:zetaeq}
	c_y(y(u),u) \zeta(u) = - \rev{c_u(y(u),u)}
\end{equation}
for $\zeta(u)$.
\STATE Compute $\hat{G}(u) = \rev{G_u(y(u),u) + G_y(y(u),u)} \zeta(u)$.
\end{algorithmic}
\end{algorithm}

In our main algorithm as well as its implementation, we will rely on 
Algorithm~\ref{alg:jacLS} to compute derivative information.

\subsection{Algorithmic framework}
\label{subsec:algo}

Our optimization procedure is described in Algorithm~\ref{alg:gn}. This method 
builds on the Levenberg-Marquardt paradigm~\cite{JNocedal_SJWright_2006} and 
more generally on quadratic regularization techniques.

At every iteration, the method computes a tentative step by approximately 
minimizing a quadratic model of the function. This step is then accepted or 
rejected depending on whether it produces sufficient function reduction 
compared to that predicted by the model. The $k$th iteration of 
Algorithm~\ref{alg:gn} will be called \emph{successful} if $u_{k+1}\neq u_k$, 
and \emph{unsuccessful} otherwise. The model is defined using a 
quadratic regularization parameter, that is decreased on successful 
iterations and increased on unsuccessful ones.

\begin{algorithm}[ht!]
\caption{Regularization method for constrained least squares}
\label{alg:gn}
\begin{algorithmic}[1]
\REQUIRE Initial iterate $u_0 \in \R^n$; initial parameter $\gamma_0 >0$; 
minimum regularization parameter $0 <\gamma_{\min} \le \gamma_0$; step 
acceptance threshold $\eta \in (0,1)$.
\smallskip
\hrule
\smallskip
\STATE Solve the constraint $c(y,u_0)=0$ for $y$ to obtain $y(u_0)$.
\STATE \rev{Evaluate $R_0=\hat{R}(u_0)$ and call Algorithm~\ref{alg:jacLS} 
to compute $G_0=\hat{G}(u_0)$.} 
\FOR{$k=0,1,2,\dotsc$} 
\STATE Compute a step $s_k$ as an approximate solution to the following 
problem
\begin{equation}
\label{eq:lssubpb}
	\min_{s \in \R^n} m_k(u_k+s):= \frac{1}{2}\|R_k\|^2 + g_k^\T s 
	+ \frac{1}{2}s^\T (H_k + \gamma_k I) s,
\end{equation}
where $g_k=G_k^\T R_k$ is the gradient of $\hat{J}$ at $u_k$ and 
$H_k \in \R^{n \times n}$ is a symmetric matrix.
\STATE Solve the constraint $c(y,u_k+s_k)=0$ for $y$ to obtain $y(u_k+s_k)$.
\STATE Compute the ratio of actual to predicted decrease in $f$ defined as
\[
\rho_k \gets \frac{\hat{J}(u_k)-\hat{J}(u_k+s_k)}{m_k(u_k)-m_k(u_k+s_k)}.
\]
\IF{$\rho_k \ge \eta$}
\STATE Set $u_{k+1} \gets u_k+s_k$ and $\gamma_{k+1} \gets \max\{0.5\gamma_k,\gamma_{\min}\}$.
\STATE Solve the constraint $c(y,u_{k+1})=0$ for $y$ to obtain $y(u_{k+1})$.
\STATE \rev{Evaluate $R_{k+1}=\hat{R}(u_{k+1})$ and call Algorithm~\ref{alg:gn} 
to compute $G_{k+1}=\hat{G}(u_{k+1})$.}
\ELSE \STATE Set $u_{k+1} \gets u_k$ and $\gamma_{k+1} \gets 2 \gamma_k$.
\ENDIF
\ENDFOR
\end{algorithmic}
\end{algorithm}

Note that our method can be instantiated in several ways, depending on the 
way $H_k$ is computed at every iteration, and on how the 
subproblem~\eqref{eq:lssubpb} is solved. In this paper, we assume that $H_k$ 
is built using only first-order derivative information, and consider two 
specific cases. When $H_k$ is the zero matrix, then the method can be viewed as 
a gradient method. When $H_k = G_k^\T G_k$, the method is a regularized 
Gauss-Newton iteration, similar to the Levenberg-Marquardt method. Although 
we focus on the two aforementioned cases in this paper, we point out that 
other formulae such as quasi-Newton updates~\cite{JNocedal_SJWright_2006} 
could also be used without the need for second-order information. As for the 
subproblem solve, we provide guarantees for exact and inexact variants of our 
framework in the next section.

\section{Complexity analysis}
\label{sec:wcc}

In this section, we investigate the theoretical properties of 
Algorithm~\ref{alg:gn} through the lens of complexity. More precisely, we are 
interested in bounding the effort needed to reach a vector $u_k$ such that
\begin{equation}
\label{eq:epspt}
	\|\rev{R(u_k)}\| \le \epsR
	\quad \mbox{or} \quad 
	\frac{\|\rev{\hat{G}(u_k)^\T R(u_k)}\|}{\|\rev{R(u_k)}\|} \le \epsg.
\end{equation}
Condition~\eqref{eq:epspt} implicitly distinguishes between two kinds of 
approximate stationary points. When possible, one would ideally compute a 
point for which the residuals are small, or a stationary point for the norm of 
the residual. This \emph{scaled gradient} condition was previously used for 
establishing complexity guarantees for algorithms applied to nonlinear 
least-squares problems~\cite{EBergou_YDiouane_VKungurtsev_CWRoyer_2022,
CCartis_NIMGould_PhLToint_2013d,NIMGould_TRees_JAScott_2019}.

Section~\ref{subsec:iter} provides an iteration 
complexity bound for all instances of the algorithm based on the 
condition~\eqref{eq:epspt}, assuming that all linear algebra operations are 
performed in an exact fashion. We then consider inexact variants of our 
algorithms, and derive the associated complexity results in 
Section~\ref{subsec:inexact}.

\subsection{Iteration complexity}
\label{subsec:iter}

We begin by a series of assumptions regarding the reduced 
formulation~\eqref{eq:reducedpb}. \rev{These assumptions are classical in 
complexity analysis on nonlinear optimization 
algorithms~\cite{CCartis_NIMGould_PhLToint_2022}.}

\begin{assumption}
\label{as:C11}
	The function $\hat{J}:u \mapsto J(y(u),u)$ is continuously differentiable 
	in $u$. Moreover, the gradient of $\hat{J}$ with respect to $u$ is 
	$L$-Lipschitz continuous for $L>0$.
\end{assumption}

Note that the first part of Assumption~\ref{as:C11} is implied by 
Assumption~\ref{as:reduced}.

\begin{assumption}
\label{as:bH}
	There exists a positive constant $M_H >0$ such that 
	$\|H_k\| \le M_H$ for all $k$.
\end{assumption}

Assumption~\ref{as:bH} is trivially satisfied when $H_k$ is the zero matrix, 
or whenever the iterates are contained in a compact set. In addition to 
boundedness, we make an additional requirement on that matrix.

\begin{assumption}
\label{as:Hkpsd}
	For any iteration $k$, the matrix $H_k$ is chosen as a positive 
	semidefinite matrix.
\end{assumption}

Note that both the zero matrix and the Gauss-Newton matrix $G_k^\T G_k$ 
are positive semidefinite, and thus satisfy Assumption~\ref{as:Hkpsd}.

\begin{lemma}
\label{le:exactsoldec}
	Let Assumptions \ref{as:C11},\ref{as:bH} and \ref{as:Hkpsd} hold. 
	Suppose that the subproblem~\eqref{eq:lssubpb} is solved exactly at 
	iteration $k$. Then, 
	\begin{equation}
	\label{eq:exactsol}
		s_k = -(H_k+\gamma_k I)^{-1} G_k^\T R_k
	\end{equation}
	where $I$ is the identity matrix in $\R^{d \times d}$. Moreover,
	\begin{equation}
	\label{eq:exactsoldec}
		m_k(u_k) - m_k(u_k+s_k) 
		\ge \frac{1}{2}\frac{\|G_k^\T R_k\|^2}{M_H+\gamma_k}.
	\end{equation}
\end{lemma}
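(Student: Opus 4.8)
The plan is to establish the two claims separately. The first claim, the closed-form expression for the exact minimizer $s_k$ in~\eqref{eq:exactsol}, follows directly from the optimality conditions of the quadratic subproblem~\eqref{eq:lssubpb}. First I would note that under Assumptions~\ref{as:bH} and~\ref{as:Hkpsd}, the matrix $H_k$ is positive semidefinite, so $H_k + \gamma_k I$ is positive definite (its eigenvalues are bounded below by $\gamma_k > 0$) and therefore invertible. This makes $m_k$ a strictly convex quadratic, so its unique minimizer is obtained by setting the gradient to zero: $\nabla m_k(u_k + s) = g_k + (H_k + \gamma_k I)s = 0$, which rearranges to $s_k = -(H_k + \gamma_k I)^{-1} g_k = -(H_k + \gamma_k I)^{-1} G_k^\T R_k$, using the definition $g_k = G_k^\T R_k$ from~\eqref{eq:lssubpb}.

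For the model decrease in~\eqref{eq:exactsoldec}, the plan is to substitute the closed form of $s_k$ into the expression for $m_k(u_k) - m_k(u_k + s_k)$ and simplify. Writing $B_k := H_k + \gamma_k I$, I would compute
\[
	m_k(u_k) - m_k(u_k+s_k) = -g_k^\T s_k - \tfrac{1}{2} s_k^\T B_k s_k
	= g_k^\T B_k^{-1} g_k - \tfrac{1}{2} g_k^\T B_k^{-1} g_k
	= \tfrac{1}{2} g_k^\T B_k^{-1} g_k,
\]
where I substituted $s_k = -B_k^{-1} g_k$ and used the symmetry of $B_k^{-1}$. The remaining task is to lower-bound $g_k^\T B_k^{-1} g_k$. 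Since $B_k$ is symmetric positive definite, so is $B_k^{-1}$, and for any vector $v$ we have $v^\T B_k^{-1} v \ge \|v\|^2 / \lambda_{\max}(B_k) = \|v\|^2 / \|B_k\|$. The final ingredient is bounding $\|B_k\| = \|H_k + \gamma_k I\| \le \|H_k\| + \gamma_k \le M_H + \gamma_k$ via Assumption~\ref{as:bH} and the triangle inequality. Combining these with $v = g_k = G_k^\T R_k$ yields exactly~\eqref{eq:exactsoldec}.

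I do not expect any serious obstacle here, as both parts reduce to standard facts about strictly convex quadratics; the argument is essentially the classical Cauchy-point-type estimate specialized to the exact minimizer. The only points requiring care are ensuring that $H_k + \gamma_k I$ is genuinely positive definite (so that the inverse exists and the minimizer is unique rather than merely stationary), which is where Assumption~\ref{as:Hkpsd} together with $\gamma_k \ge \gamma_{\min} > 0$ is used, and correctly handling the quadratic simplification so that the cross term and the quadratic term combine to leave the factor $\tfrac{1}{2}$. A minor notational caveat is that the statement writes the identity as lying in $\R^{d \times d}$ whereas the ambient space is $\R^n$; I would treat $d = n$ so that $I$ is the $n \times n$ identity matching the dimension of $s$.
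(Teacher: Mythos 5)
Your proof is correct and follows essentially the same route as the paper's: identify $s_k$ as the unique minimizer of the strongly convex quadratic, substitute it to obtain $\tfrac{1}{2}g_k^\T(H_k+\gamma_k I)^{-1}g_k$ for the model decrease, and bound this below by $\tfrac{1}{2}\|g_k\|^2/(M_H+\gamma_k)$ using $\|H_k+\gamma_k I\|\le \|H_k\|+\gamma_k$. Your additional remarks (positive definiteness via $\gamma_k\ge\gamma_{\min}>0$, and reading $d=n$ in the statement of the identity matrix) are accurate and consistent with the paper.
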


\begin{proof}
	Under Assumption~\ref{as:Hkpsd}, the subproblem~\eqref{eq:lssubpb} is a 
	strongly convex quadratic subproblem. It thus possesses a unique global 
	minimum given by $-(H_k+\gamma_k I)^{-1} G_k^\T R_k$, which is precisely 
	\eqref{eq:exactsol}. Using this formula for $s_k$, we obtain 
	\begin{eqnarray*}
		m_k(u_k) - m_k(u_k+s_k) 
		&\rev{=} &-R_k^\T G_k s_k -\frac{1}{2}s_k^\T (H_k+\gamma_k I) s_k \\
		&= &R_k^\T G_k (H_k+\gamma_k I)^{-1} G_k^\T R_k 
		-\frac{1}{2}R_k^\T G_k (H_k+\gamma_k I)^{-1} G_k^\T R_k \\
		&= &\frac{1}{2}R_k^\T G_k (H_k+\gamma_k I)^{-1} G_k^\T R_k \\
		&\ge &\frac{1}{2}\frac{\|G_k^\T R_k\|^2}{\|H_k+\gamma_k I\|}.
	\end{eqnarray*}
	By Assumption~\ref{as:bH}, we have
	\[
		\rev{\|H_k + \gamma_k I\|} \le \|H_k\|+\gamma_k \le M_H + \gamma_k.
	\]
	Hence, we have
	\[
		m_k(u_k) - m_k(u_k+s_k) \ge 
		\frac{1}{2}\frac{\|G_k^\T R_k\|^2}{M_H+\gamma_k},
	\]
	as required.
\end{proof}

Our second ingredient for a complexity proof consists in bounding the 
value of the regularization parameter. 

\begin{lemma}
\label{le:bdgamma}
	Let Assumptions~\ref{as:C11}, \rev{\ref{as:bH}  and \ref{as:Hkpsd}}
	hold\rev{, and consider the $k$th iteration of Algorithm~\ref{alg:gn}. 
	Suppose that $\|G_k^\T R_k\| >0$.} Then,
	\begin{enumerate}[(i)]
		\item If $k$ is the index of an unsuccessful iteration, then 
		\[
			\rev{
			\gamma_k 
			< \underline{\gamma}:= 
			\tfrac{L+\sqrt{L^2+4(1-\eta)L\,M_H}}{2(1-\eta)}.
			}
		\]
		\item For any iteration $k$, 
		\begin{equation}
		\label{eq:bdgamma}
			\gamma_k 
			\le 
			\gamma_{\max}:= \max\left\{1,\gamma_0,\rev{2\underline{\gamma}}
			 \right\}.
		\end{equation}
	\end{enumerate}
\end{lemma}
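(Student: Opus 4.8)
The plan is to establish part (i) via the classical argument that a sufficiently large regularization parameter forces the step to be accepted, and then to deduce part (ii) by a short induction exploiting the halving/doubling update rule for $\gamma_k$. Throughout I would work in the exact-solve regime of this subsection, so that the closed form for $s_k$ and the decrease bound of Lemma~\ref{le:exactsoldec} are available.

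For part (i), I would begin from the definition of $\rho_k$ and rewrite $1-\rho_k$ as $\frac{\hat{J}(u_k+s_k)-m_k(u_k+s_k)}{m_k(u_k)-m_k(u_k+s_k)}$, using that $m_k(u_k)=\tfrac12\|R_k\|^2=\hat{J}(u_k)$. Expanding $m_k(u_k+s_k)$ and using $g_k=\nabla\hat{J}(u_k)$, the numerator splits into $[\hat{J}(u_k+s_k)-\hat{J}(u_k)-g_k^\T s_k]-\tfrac12 s_k^\T(H_k+\gamma_k I)s_k$. I would bound the first bracket by $\tfrac{L}{2}\|s_k\|^2$ via the Lipschitz continuity of $\nabla\hat{J}$ (Assumption~\ref{as:C11}), and discard the second term, which is nonpositive because $H_k+\gamma_k I\succ 0$ under Assumption~\ref{as:Hkpsd}; this yields numerator $\le\tfrac{L}{2}\|s_k\|^2$. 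For the denominator I would invoke Lemma~\ref{le:exactsoldec}, giving $m_k(u_k)-m_k(u_k+s_k)\ge\tfrac12\|g_k\|^2/(M_H+\gamma_k)$, which is where $M_H$ (Assumption~\ref{as:bH}) enters and where $\|G_k^\T R_k\|>0$ guarantees a strictly positive denominator. Finally, from the closed form $s_k=-(H_k+\gamma_k I)^{-1}g_k$ together with $H_k\succeq 0$, the eigenvalues of $H_k+\gamma_k I$ are at least $\gamma_k$, so $\|s_k\|\le\|g_k\|/\gamma_k$. Substituting gives $1-\rho_k\le L(M_H+\gamma_k)/\gamma_k^2$, and requiring the right-hand side to be at most $1-\eta$ is equivalent to $(1-\eta)\gamma_k^2-L\gamma_k-LM_H\ge 0$, whose positive root is exactly $\underline{\gamma}$. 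Hence $\gamma_k\ge\underline{\gamma}$ forces $\rho_k\ge\eta$, so the iteration is successful; contraposing yields (i).

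Part (ii) then follows by induction on $k$. The base case is $\gamma_0\le\gamma_{\max}$. For the inductive step, a successful iteration sets $\gamma_{k+1}=\max\{0.5\gamma_k,\gamma_{\min}\}\le\gamma_k\le\gamma_{\max}$ (using $\gamma_{\min}\le\gamma_0\le\gamma_{\max}$), while an unsuccessful iteration satisfies $\gamma_k<\underline{\gamma}$ by (i), hence $\gamma_{k+1}=2\gamma_k<2\underline{\gamma}\le\gamma_{\max}$; either way the bound is preserved.

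I expect the main obstacle to be the bookkeeping in part (i): choosing precisely which terms to keep and which to discard (dropping the full quadratic $\tfrac12 s_k^\T(H_k+\gamma_k I)s_k$ rather than only its $H_k$ part, and pairing the numerator estimate with the $M_H+\gamma_k$ denominator) so that exactly $L$ and $M_H$ appear, and then recognizing the acceptance condition as a quadratic inequality in $\gamma_k$ whose larger root is $\underline{\gamma}$. Once that reduction is in place, identifying $\underline{\gamma}$ and carrying out the induction in (ii) are routine.
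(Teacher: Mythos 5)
Your proof is correct and follows essentially the same route as the paper: an unsuccessful iteration plus the Lipschitz Taylor bound, the model-decrease lower bound of Lemma~\ref{le:exactsoldec}, and the step-norm bound $\|s_k\|\le\|g_k\|/\gamma_k$ combine into a quadratic inequality in $\gamma_k$, whose positive root gives the threshold; contraposition gives (i) and the doubling/halving induction gives (ii). The one substantive difference is which terms you discard: you drop the entire quadratic $\tfrac12 s_k^\T(H_k+\gamma_k I)s_k$, arriving at $(1-\eta)\gamma_k^2-L\gamma_k-LM_H<0$ and hence exactly the constant $\underline{\gamma}=\tfrac{L+\sqrt{L^2+4(1-\eta)LM_H}}{2(1-\eta)}$ appearing in the statement, whereas the paper keeps the $-\tfrac{\gamma_k}{2}\|s_k\|^2$ term, obtains $(2-\eta)\gamma_k^2-(L-M_H)\gamma_k-LM_H<0$, and ends up with a slightly smaller (tighter) threshold that does not literally match the constant announced in the lemma. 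Since the paper's derived root is bounded above by the stated $\underline{\gamma}$ (the two quadratics differ by $t^2+M_H t\ge 0$), both arguments prove the claim, but yours is the one that actually produces the displayed formula; the paper's proof proves a marginally stronger bound than it states. Your bookkeeping is otherwise sound, including the role of $\|G_k^\T R_k\|>0$ in making the model decrease strictly positive so that $\rho_k$ is well defined, and the use of $\gamma_{\min}\le\gamma_0\le\gamma_{\max}$ in the successful branch of the induction.
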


\begin{proof}
	Suppose that the $k$th iteration is unsuccessful, i.e. that $\rho_k < \eta$. 
	Then, one has
	\begin{equation}
	\label{eq:rhounsucc}
		\eta(m_k(u_k+s_k)-m_k(u_k)) < \hat{J}(u_k+s_k)-\hat{J}(u_k).
	\end{equation}
	Using Assumption~\ref{as:C11}, a Taylor expansion of $\hat{J}$ around $u_k$ 
	yields
	\begin{eqnarray*}
		\hat{J}(u_k+s_k) - \hat{J}(u_k) 
		&\le &\nabla \hat{J}(u_k)^\T s_k + \frac{L}{2}\|s_k\|^2 \\
		&= &g_k^\T s_k + \frac{L}{2}\|s_k\|^2 \\
		&= &m_k(u_k+s_k)-m_k(u_k)- \rev{\frac{1}{2}s_k^\T H_k s_k 
		-\frac{\gamma_k}{2}\|s_k\|^2 }
		+ \frac{L}{2}\|s_k\|^2  \\
		&\le &m_k(u_k+s_k)-m_k(u_k) \rev{+\frac{L-\gamma_k}{2}\|s_k\|^2},
	\end{eqnarray*}
	where the last inequality holds because of Assumption~\ref{as:Hkpsd}. 
	Combining this inequality with~\eqref{eq:rhounsucc}, we obtain that 
	\begin{eqnarray*}
		\eta(m_k(u_k+s_k)-m_k(u_k)) 
		&< &\hat{J}(u_k+s_k)-\hat{J}(u_k) \\
		\Rightarrow
		\eta(m_k(u_k+s_k)-m_k(u_k)) 
		&< &m_k(u_k+s_k)-m_k(u_k)+\frac{\rev{L-\gamma_k}}{2}\|s_k\|^2 \\
		\Rightarrow
		(1-\eta)(m_k(u_k)-m_k(u_k+s_k)) 
		&< &\frac{\rev{L-\gamma_k}}{2}\|s_k\|^2.
	\end{eqnarray*}
	From Lemma~\ref{le:exactsoldec}, we obtain both an expression for $s_k$ 
	and a bound on the left-hand side. Noting that 
	\rev{
	\[
		\|s_k\| \le \|(H_k+\gamma_k I)^{-1}\| \|G_k^\T R_k\| 
		\le \frac{\|G_k^\T R_k\|}{\gamma_k},
	\]
	}
	we obtain
	\begin{eqnarray*}
		(1-\eta)(m_k(u_k)-m_k(u_k+s_k)) 
		&< &\frac{L}{2}\|s_k\|^2 \\
		\Leftarrow
		\frac{(1-\eta)}{2}\frac{\|G_k^\T R_k\|^2}{M_H+\gamma_k} 
		&< &\frac{\rev{L-\gamma_k}}{2}\frac{\|G_k^\T R_k\|^2}{\rev{\gamma_k^2}} \\
		\rev{\Leftrightarrow
		\frac{1-\eta}{M_H+\gamma_k}} 
		&< &\rev{\frac{L-\gamma_k}{\gamma_k^2}} \\
		\rev{\Leftrightarrow
		(2-\eta)\gamma_k^2 - (L-M_H)\gamma_k -L\,M_H < 0.
		}
	\end{eqnarray*}
	Overall, we have shown that if the $k$th iteration is unsuccessful, then 
	necessarily \rev{$(2-\eta)\gamma_k^2 - (L-M_H)\gamma_k -L\,M_H < 0$, which can 
	only occur as long as
	\[
		\gamma_k 
		< 
		\underline{\gamma}:=
		\frac{L-M_H+\sqrt{(L-M_H)^2+4(2-\eta)L\,M_H}}{2(2-\eta)}.
	\]
	}
	By a contraposition 
	argument, we then obtain that 
	$\gamma_k \ge \rev{\underline{\gamma}}$ 
	implies that the iteration is successful and that 
	$\gamma_{k+1} \le \gamma_k$. Combining this observation with the initial 
	value of $\gamma_0$ and the update mechanism for $\gamma_k$, we find that 
	$\gamma_k$ can never exceed 
	$\max\{\gamma_0,2\underline{\gamma}\} \le \gamma_{\max}$, proving the desired 
	result.
\end{proof}

\rev{Note that we choose $\gamma_{\max}$} to be greater than or equal to $1$ 
in order to simplify our bounds later on, but that the analysis below extends 
to the choice $\gamma_{\max}=\max\{\gamma_0,2\underline{\gamma}\}$. 

We now provide our first iteration complexity bound, that focuses on 
successful iterations.

\begin{lemma}
\label{le:exactsuccits}
	Let Assumptions~\ref{as:C11}, \ref{as:bH} and \ref{as:Hkpsd} hold. Let
	$\epsg \in (0,1)$, and let $\cS_{\epsg,\epsR}$ denote the set of successful 
	iterations for which $u_k$ does not satisfy~\eqref{eq:epspt}. Then, 
	\begin{equation}
	\label{eq:exactsuccits}
		\left| S_{\epsg,\epsR}\right| 
		\quad \le \quad
		\left\lceil
		\mathcal{C}_{\cS} \log(2 \hat{J}(u_0) \epsR^{-2}) \epsR^{-2}
		\right\rceil + 1,
	\end{equation}
	where $\mathcal{C}_{\cS} = \tfrac{M_H+\gamma_{\max}}{\eta}$.
\end{lemma}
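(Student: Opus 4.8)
The plan is to turn the per-iteration decrease of Lemma~\ref{le:exactsoldec} into a fixed \emph{multiplicative} contraction of $\hat{J}$ on each iteration of $S_{\epsg,\epsR}$, and then to convert that geometric decay into a logarithmic count. First I would fix $k \in S_{\epsg,\epsR}$. Since such an iteration is successful, $\rho_k \ge \eta$, so that $\hat{J}(u_k) - \hat{J}(u_{k+1}) \ge \eta\big(m_k(u_k) - m_k(u_k+s_k)\big)$. Feeding in the model-decrease bound~\eqref{eq:exactsoldec} together with the uniform regularization bound~\eqref{eq:bdgamma} of Lemma~\ref{le:bdgamma} gives
\[
\hat{J}(u_k) - \hat{J}(u_{k+1}) \ge \frac{\eta}{2(M_H+\gamma_{\max})}\,\|G_k^\T R_k\|^2 .
\]
Because $u_k$ violates~\eqref{eq:epspt}, I may use both of its defining inequalities: $\|R_k\| > \epsR$ and $\|G_k^\T R_k\| > \epsg\,\|R_k\|$. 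Substituting the second and recalling $\hat{J}(u_k) = \tfrac12\|R_k\|^2$ converts the additive estimate into the contraction $\hat{J}(u_{k+1}) \le (1-\kappa)\,\hat{J}(u_k)$ with $\kappa := \eta\,\epsg^2/(M_H+\gamma_{\max})$, so that $\kappa^{-1} = \mathcal{C}_{\cS}\,\epsg^{-2}$.

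The second step propagates this contraction across the whole run. The crucial auxiliary observation is that $\hat{J}(u_k)$ is non-increasing over \emph{all} iterations: on unsuccessful iterations $u_{k+1} = u_k$, and on successful ones the decrease above is non-negative. Writing $S_{\epsg,\epsR} = \{k_1 < \cdots < k_N\}$ and chaining the contraction through the monotone stretches separating consecutive $k_i$ then yields $\hat{J}(u_{k_N}) \le (1-\kappa)^{N-1}\hat{J}(u_0)$. On the other hand, $k_N$ violates~\eqref{eq:epspt}, so $\|R_{k_N}\| > \epsR$, i.e. $\hat{J}(u_{k_N}) > \tfrac12\epsR^2$. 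Combining the two bounds and taking logarithms gives $(N-1)\log(1-\kappa) > \log\big(\epsR^2/(2\hat{J}(u_0))\big)$.

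The final step is the elementary conversion to a clean estimate. Using $\log(1-\kappa) \le -\kappa$ (valid since $\kappa \in (0,1)$, which follows from $\epsg \in (0,1)$, $\eta<1$ and $\gamma_{\max} \ge 1$) to avoid an awkward $1/\log\!\big(1/(1-\kappa)\big)$ factor, I would rearrange to $N - 1 < \kappa^{-1}\log\!\big(2\hat{J}(u_0)\,\epsR^{-2}\big) = \mathcal{C}_{\cS}\,\epsg^{-2}\log\!\big(2\hat{J}(u_0)\,\epsR^{-2}\big)$, then take a ceiling and add $1$ to reach a bound of the stated shape, with the logarithmic dependence originating from the residual threshold $\epsR$ and the polynomial dependence from the relative-gradient threshold $\epsg$.

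I expect the main obstacle to be the bookkeeping in the middle step rather than any single inequality: the indices of $S_{\epsg,\epsR}$ need not be consecutive, so I must argue carefully that the interleaved iterations---the successful ones that already satisfy~\eqref{eq:epspt}, as well as the unsuccessful ones---can only decrease $\hat{J}$ further, which is exactly what licenses composing the per-iteration factors $(1-\kappa)$ into $(1-\kappa)^{N-1}$. A secondary subtlety is verifying $\kappa \in (0,1)$ so that the logarithmic estimate is legitimate and the resulting constant is precisely $\mathcal{C}_{\cS}=(M_H+\gamma_{\max})/\eta$.
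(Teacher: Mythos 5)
Your proposal is correct and follows essentially the same route as the paper's proof: a per-iteration multiplicative contraction $\hat{J}(u_{k+1})\le(1-\kappa)\hat{J}(u_k)$ on iterations violating~\eqref{eq:epspt}, chained via the monotonicity of $\hat{J}$ across all iterations, bounded below by $\tfrac12\epsR^2$ at the last such iteration, and converted with $-\ln(1-t)\ge t$. Note that both your derivation and the paper's own proof yield a factor $\epsg^{-2}$, so the trailing $\epsR^{-2}$ in the displayed bound~\eqref{eq:exactsuccits} is evidently a typo.
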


\begin{proof}
	Let $k \in \cS_{\epsg,\epsR}$. By definition, the corresponding iterate 
	$u_k$ satisfies
	\begin{equation}
	\label{eq:notepspt}
		\|R_k\| \ge \epsR \quad \mbox{and} \quad 
		\frac{\|G_k^\T R_k\|}{\|R_k\|} \ge \epsg.
	\end{equation}
	Moreover, since $k$ corresponds to a successful iteration, we have 
	$\rho_k \ge \eta$, i.e.
	\[
		\hat{J}(u_k)-\hat{J}(u_{k+1}) \ge \eta\left(m_k(u_k)-m_k(u_k+s_k)\right) 
		\ge \eta \frac{\|G_k^\T R_k\|^2}{2(M_H + \gamma_k)} 
		\ge \eta \frac{\|G_k^\T R_k\|^2}{2(M_H + \gamma_{\max})},
	\]
	where we used the results of Lemmas~\ref{le:exactsoldec} 
	and~\ref{le:bdgamma} to bound the model decrease and $\gamma_k$, 
	respectively. Combining the last inequality with~\eqref{eq:notepspt} leads 
	to
	\begin{eqnarray*}
		\hat{J}(u_k) - \hat{J}(u_{k+1}) 
		&\ge &\frac{\eta}{2(M_H + \gamma_{\max})}\|G_k^\T R_k\|^2 \\
		&= &\frac{\eta}{2(M_H + \gamma_{\max})} 
		\frac{\|G_k^\T R_k\|^2}{\|R_k\|^2} \|R_k\|^2 \\
		&\ge &\frac{\eta}{2(M_H + \gamma_{\max})} \epsg^2 \|R_k\|^2 \\
		&= &\frac{\eta}{M_H+\gamma_{\max}} \epsg^2 \hat{J}(u_k),
	\end{eqnarray*}
	where the last line follows by definition of $\hat{J}(u_k)$. Since 
	$\tfrac{\eta}{M_H+\gamma_{\max}}\epsg^2 \in (0,1)$ by definition of all 
	quantities involved, we obtain that
	\begin{equation}
	\label{eq:decJnotepspt} 
		\left(1-\frac{\eta}{M_H+\gamma_{\max}} \epsg^2 \right) \hat{J}(u_k) 
		\ge \hat{J}(u_{k+1}).
	\end{equation}
	Let now $\cS_{\epsg,\epsR}^k := \{\ell < k | \ell \in \cS_{\epsg,\epsR}\}$.
	Recalling that the iterate only changes on successful iterations and that 
	the function $\hat{J}$ is bounded below by $0$, we obtain that
	\begin{eqnarray*}
		\left(1-\frac{\eta}{M_H+\gamma_{\max}}\epsg^2 \right)^{\left|\cS_{\epsg,\epsR}^k\right|}
		\hat{J}(u_0) &\ge &\hat{J}(u_k) \\
		\left(1-\frac{\eta}{M_H+\gamma_{\max}}\epsg^2 \right)^{\left|\cS_{\epsg,\epsR}^k\right|}
		\hat{J}(u_0) &\ge &\frac{1}{2}\epsR^2,
	\end{eqnarray*}
	where the last line uses $k \in \cS_{\epsg,\epsR}$. Taking logarithms and re-arranging, 
	we arrive at
	\begin{eqnarray*}
		\left|\cS_{\epsg,\epsR}^k\right|\ln\left(1-\frac{\eta}{M_H+\gamma_{\max}}\epsg^2 \right)
		&\ge &\ln\left(\epsR^2 /(2 \hat{J}(u_0))\right) \\
		\left|\cS_{\epsg,\epsR}^k\right|
		&\le &\frac{\ln\left(\epsR^2 /(2 \hat{J}(u_0)\right)}
		{\ln\left(1-\frac{\eta}{M_H+\gamma_{\max}}\epsg^2 \right)} \\
		&\le &\ln\left(2\hat{J}(u_0)\epsR^{-2}\right)\frac{M_H+\gamma_{\max}}{\eta}\epsg^{-2},
	\end{eqnarray*}
	where the last inequality comes from $-\ln(1-t) \ge t$ for any $t \in (0,1)$. 
	As a result, we obtain that
	\[
		\left|\cS_{\epsg,\epsR}\right| 
		\le 1 + \ln\left(2 \hat{J}(u_0)\epsR^{-2}\right)\frac{M_H+\gamma_{\max}}{\eta}\epsg^{-2},
	\]
	where the additional $1$ accounts for the largest iteration in $\cS_{\epsg,\epsR}$.
\end{proof}

\begin{lemma}
\label{le:exactsuccunsucc}
	Under the assumptions of Lemma~\ref{le:exactsuccits}, let $\cU_{\epsg,\epsR}$ 
	be the set of unsuccessful iterations for which~\eqref{eq:epspt} does not hold. Then,
	\begin{equation}
	\label{eq:exactsuccunsucc}
		\left| \cU_{\epsg,\epsR}\right| 
		\le 
		\left\lceil 1 + \log_2 \left(\gamma_{\max}\right) 
		\right\rceil
		\left| \cS_{\epsg,\epsR} \right|.
	\end{equation}
\end{lemma}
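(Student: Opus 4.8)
The plan is to charge the unsuccessful iterations against the successful ones by exploiting the monotone behavior of the regularization parameter $\gamma_k$ as a potential function. Recall from Algorithm~\ref{alg:gn} that $\gamma_{k+1} = 2\gamma_k$ on every unsuccessful iteration, whereas $\gamma_{k+1} = \max\{0.5\gamma_k,\gamma_{\min}\} \ge \tfrac{1}{2}\gamma_k$ on every successful one; moreover, Lemma~\ref{le:bdgamma} guarantees $\gamma_k \le \gamma_{\max}$ throughout the run of the algorithm. Thus $\gamma_k$ can only grow geometrically during stretches of consecutive unsuccessful iterations, and this growth is capped by $\gamma_{\max}$, which will limit how many unsuccessful iterations can occur before a successful one is triggered.

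First I would partition $\cU_{\epsg,\epsR}$ into maximal runs of consecutive unsuccessful iterations and argue that each such run is immediately followed by an iteration belonging to $\cS_{\epsg,\epsR}$. Indeed, by Lemma~\ref{le:bdgamma}(i) any iteration with $\gamma_k \ge \underline{\gamma}$ is successful; since $\gamma_k$ doubles at each unsuccessful step, no run can continue indefinitely, so every run terminates at a successful iteration. Because the iterate is unchanged throughout a run (one has $u_{k+1}=u_k$ on unsuccessful iterations), the point at which this terminating iteration starts still violates~\eqref{eq:epspt}, placing that iteration in $\cS_{\epsg,\epsR}$. Mapping each run to its terminating successful iteration is then injective, since distinct maximal runs are separated by successful iterations, so the number of runs is at most $|\cS_{\epsg,\epsR}|$.

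Next I would bound the length of a single run. If a run consists of iterations $a, a+1, \dots, b$, then $\gamma_{a+i} = 2^i \gamma_a$ for $0 \le i \le b-a$, and the cap $\gamma_b \le \gamma_{\max}$ forces $2^{b-a}\gamma_a \le \gamma_{\max}$; using $\gamma_{\max}\ge 1$ together with control on the starting value $\gamma_a$, this yields a run of length at most $\lceil 1 + \log_2(\gamma_{\max})\rceil$. Combining this per-run bound with the count of at most $|\cS_{\epsg,\epsR}|$ runs then gives~\eqref{eq:exactsuccunsucc}. The main obstacle is the per-run estimate: one must pin down the value of $\gamma$ at the start of each run — in particular for the very first run, where it equals $\gamma_0$ rather than a post-success value — in order to land exactly on the constant $\lceil 1 + \log_2(\gamma_{\max})\rceil$ without picking up an extra dependence on $\gamma_{\min}$ or $\gamma_0$. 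An essentially equivalent route is a telescoping argument on the potential $\log_2 \gamma_k$ summed over all iterations, in which the $+1$ increments on unsuccessful steps and the $\ge -1$ decrements on successful steps, bounded against $\log_2(\gamma_{\max})$, directly relate $|\cU_{\epsg,\epsR}|$ and $|\cS_{\epsg,\epsR}|$.
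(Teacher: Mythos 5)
Your argument is essentially the paper's own proof, which is only a two-sentence sketch: it invokes the fact that $\gamma_k$ doubles between consecutive successful iterations, the cap $\gamma_k \le \gamma_{\max}$ from Lemma~\ref{le:bdgamma}, and an adjustment ``accounting for the first successful iteration,'' citing the analogous trust-region lemma of Curtis et al.; your run decomposition and the telescoping of $\log_2 \gamma_k$ are precisely how that sketch is made rigorous. The obstacle you flag --- pinning down $\gamma$ at the start of each run so as not to pick up a dependence on $\gamma_0$ or $\gamma_{\min}$ --- is a genuine loose end, but the paper glosses over it in exactly the same way, and the telescoping variant you mention (unsuccessful steps contribute $+1$, successful steps at least $-1$, total bounded via $\gamma_{\max}$) is the cleanest way to close it.
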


\begin{proof}
	The proof tracks that 
	of~\cite[Lemma 2.5]{FECurtis_DPRobinson_CWRoyer_SJWright_2021} for 
	the trust-region case. Between two successful iterations, the value 
	of $\gamma_k$ only increases by factors of $2$. Combining this observation 
	with the fact that $\gamma_k \le \gamma_{\max}$ per Lemma~\ref{le:bdgamma} 
	and accounting for the first successful iteration leads to the final 
	result.
\end{proof}

Combining Lemmas~\ref{le:exactsuccits} and~\ref{le:exactsuccunsucc} finally 
yields our main complexity result.

\begin{theorem}
\label{th:wccexactits}
	Under Assumptions~\ref{as:C11}, \ref{as:bH} and \ref{as:Hkpsd}, the number 
	of successful iterations (and Jacobian evaluations) 
	before reaching an iterate satisfying~\eqref{eq:epspt} satisfies
	\begin{equation}
	\label{eq:wccexactsucc}
		|\cS_{\epsg,\epsR}| = 
		\cO\left( \log(\epsR^{-1}) \epsg^{-2} \right)
	\end{equation}
	and the total number of iterations (and residual evaluations) 
	before reaching such an iterate satisfies
	\begin{equation}
	\label{eq:wccexactits}
		|\cS_{\epsg,\epsR}|+|\cU_{\epsg,\epsR}| 
		= \cO\left( \log(\epsR^{-1}) \epsg^{-2} \right).
	\end{equation}
\end{theorem}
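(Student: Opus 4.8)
The plan is to obtain both bounds by directly combining Lemma~\ref{le:exactsuccits} and Lemma~\ref{le:exactsuccunsucc} and then rewriting the resulting expressions in $\cO$ notation. Since all the analytical work has already been carried out in those two lemmas, the only remaining task is bookkeeping on the orders of magnitude in $\epsR$ and $\epsg$.

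First, I would start from the bound on successful iterations established in Lemma~\ref{le:exactsuccits}, namely
$$|\cS_{\epsg,\epsR}| \le \left\lceil \frac{M_H+\gamma_{\max}}{\eta}\log\!\left(2\hat{J}(u_0)\epsR^{-2}\right)\epsg^{-2}\right\rceil + 1.$$
The key observation is that $\log(2\hat{J}(u_0)\epsR^{-2}) = \log(2\hat{J}(u_0)) + 2\log(\epsR^{-1})$, so that the logarithmic factor is $\cO(\log(\epsR^{-1}))$ as $\epsR \to 0$. Because $M_H$, $\gamma_{\max}$ and $\eta$ are fixed constants independent of the tolerances (the bound on $\gamma_{\max}$ being supplied by Lemma~\ref{le:bdgamma}), the prefactor $\tfrac{M_H+\gamma_{\max}}{\eta}$ is absorbed into the $\cO$ constant, which immediately yields~\eqref{eq:wccexactsucc}. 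Since a Jacobian is recomputed only after a step has been accepted, i.e. only on successful iterations, this same bound counts Jacobian evaluations.

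Next, I would invoke Lemma~\ref{le:exactsuccunsucc}, which gives $|\cU_{\epsg,\epsR}| \le \lceil 1 + \log_2(\gamma_{\max})\rceil\,|\cS_{\epsg,\epsR}|$. As $\gamma_{\max}$ is a fixed constant, the multiplicative factor $\lceil 1 + \log_2(\gamma_{\max})\rceil$ is itself constant, so $|\cU_{\epsg,\epsR}| = \cO(|\cS_{\epsg,\epsR}|)$. Adding the two counts then gives
$$|\cS_{\epsg,\epsR}| + |\cU_{\epsg,\epsR}| \le \left(1 + \lceil 1 + \log_2(\gamma_{\max})\rceil\right)|\cS_{\epsg,\epsR}|,$$
which is of the same order as the successful-iteration bound; substituting~\eqref{eq:wccexactsucc} then delivers~\eqref{eq:wccexactits}. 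Since every iteration, whether eventually declared successful or not, requires evaluating $\hat{J}(u_k+s_k)$ (and hence the residual $\hat{R}(u_k+s_k)$) in order to form the ratio $\rho_k$, this total also bounds the number of residual evaluations.

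I do not expect any genuine obstacle here, as the argument is a routine assembly of the preceding lemmas. The only point requiring care is to confirm that every constant entering the prefactors ($M_H$, $\gamma_{\max}$, $\eta$, and the term $\log_2(\gamma_{\max})$) is genuinely independent of $\epsR$ and $\epsg$, so that all of them can be hidden inside the $\cO$ notation without altering the stated dependence on the two tolerances.
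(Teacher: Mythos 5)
Your proposal is correct and follows exactly the route the paper takes: the paper offers no further argument beyond ``combining Lemmas~\ref{le:exactsuccits} and~\ref{le:exactsuccunsucc}'', which is precisely the bookkeeping you carry out (including the correct reading of the successful-iteration bound as scaling with $\epsg^{-2}$, consistent with the proof of Lemma~\ref{le:exactsuccits}). Your added remarks on why Jacobian evaluations track successful iterations and residual evaluations track all iterations are accurate and consistent with Algorithm~\ref{alg:gn}.
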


The result of Theorem~\ref{th:wccexactits} improves over that obtained by 
Bergou et al~\cite{EBergou_YDiouane_VKungurtsev_CWRoyer_2022} in a more 
general setting, and is consistent with that in Gould et 
al~\cite{NIMGould_TRees_JAScott_2019}, where a series of results with 
vanishing dependencies in $\epsR$ were established. Compared to those 
latter results, our bounds~\eqref{eq:wccexactsucc} 
and~\eqref{eq:wccexactits} have logarithmic dependency on $\epsR$ but 
do not involve increasingly larger constants.

To end this section, we provide a result tailored to our implicit 
constrained setup, that accounts for the operations that are performed 
throughout the course of the algorithm.

\begin{corollary}
\label{co:wccexactPDE}
	Under the assumptions of Theorem~\ref{th:wccexactits}, suppose further 
	than Assumption~\ref{as:reduced} holds and that Jacobian evaluations are 
	performed using Algorithm~\ref{alg:jacLS}. Then, the number of 
	solves of the implicit constraint for $y$ is 
	\begin{equation}
	\label{eq:wccexactPDEstate}
		1+|\cS_{\epsg,\epsR}|+|\cU_{\epsg,\epsR}|  
		= \cO\left( \log(\epsR^{-1}) \epsg^{-2} \right),
	\end{equation}
	while the number of adjoint solves (using~\eqref{eq:zetaeq})
	is
	\begin{equation}
	\label{eq:wccexactPDEadj}
		1+|\cS_{\epsg,\epsR}| 
		= \cO\left( \log(\epsR^{-1}) \epsg^{-2} \right).
	\end{equation}
\end{corollary}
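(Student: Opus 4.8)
The plan is to carry out a direct bookkeeping argument that tracks each constraint solve and each adjoint solve performed over the course of Algorithm~\ref{alg:gn}, and then to invoke Theorem~\ref{th:wccexactits} to convert these operation counts into the stated asymptotic bounds. Assumption~\ref{as:reduced} guarantees that the quantities $y(u)$ and $\zeta(u)$ are well-defined, so that each of these solves is meaningful.

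First I would count the solves of the implicit constraint $c(y,\cdot)=0$ for $y$. One such solve occurs before the loop, at Line~1, to produce $y(u_0)$. Within each pass through the loop, Line~5 performs exactly one constraint solve to obtain $y(u_k+s_k)$, which is required in order to form the ratio $\rho_k$. On a successful iteration we have $u_{k+1}=u_k+s_k$, so the state $y(u_{k+1})$ requested at Line~9 coincides with the $y(u_k+s_k)$ already computed at Line~5 and therefore requires no additional solve; on an unsuccessful iteration no further state is needed. Hence every iteration, whether in $\cS_{\epsg,\epsR}$ or in $\cU_{\epsg,\epsR}$, contributes exactly one constraint solve, giving a total of $1+|\cS_{\epsg,\epsR}|+|\cU_{\epsg,\epsR}|$ solves before termination.

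Next I would count the adjoint solves of~\eqref{eq:zetaeq}. By inspection of Algorithm~\ref{alg:jacLS}, each call to that routine performs exactly one such solve (at its Step~1). Algorithm~\ref{alg:jacLS} is invoked at Line~2, once before the loop, and at Line~10, which is executed only on successful iterations, when the Jacobian must be recomputed at the new iterate; unsuccessful iterations leave $u_k$ unchanged and trigger no Jacobian evaluation. This yields a total of $1+|\cS_{\epsg,\epsR}|$ adjoint solves.

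Finally, substituting the two bounds from Theorem~\ref{th:wccexactits}, namely $|\cS_{\epsg,\epsR}|+|\cU_{\epsg,\epsR}|=\cO(\log(\epsR^{-1})\epsg^{-2})$ and $|\cS_{\epsg,\epsR}|=\cO(\log(\epsR^{-1})\epsg^{-2})$, into the two counts immediately produces~\eqref{eq:wccexactPDEstate} and~\eqref{eq:wccexactPDEadj}. Since the argument is purely a matter of counting, I do not anticipate a genuine obstacle; the only subtlety meriting care is verifying that the state computed at Line~5 can be reused at Line~9 on accepted steps, so that a successful iteration does not incur a second constraint solve and the count $1+|\cS_{\epsg,\epsR}|+|\cU_{\epsg,\epsR}|$ rather than a larger one is obtained.
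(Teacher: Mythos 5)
Your proposal is correct and matches the paper's intent exactly: the paper states this corollary without proof, treating it as immediate bookkeeping from Algorithm~\ref{alg:gn} and Theorem~\ref{th:wccexactits}, which is precisely the counting argument you carry out. Your observation that the state computed at Line~5 is reused at Line~9 on accepted steps is the right way to reconcile the algorithm's listing with the stated count $1+|\cS_{\epsg,\epsR}|+|\cU_{\epsg,\epsR}|$.
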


\subsection{Inexact variants}
\label{subsec:inexact}

We now consider solving the subproblem~\eqref{eq:exactsol} in an inexact 
fashion. Such a procedure is classical in large-scale optimization, and is 
primarily relevant whenever $H_k$ is not chosen as a constant matrix.

\begin{assumption}
\label{as:inexsteps}
	For any iteration $k$, the step $s_k$ is chosen so as to satisfy
	\begin{equation}
	\label{eq:inexsteps}
		(H_k+\gamma_k I)s_k = -g_k + t_k, \quad 
		\|t_k\| \le \theta \sqrt{\frac{\gamma_k}{\|H_k\|+\gamma_k}} \|g_k\|
	\end{equation}
	for $\theta \in [0,1)$.
\end{assumption}

Assuming that the linear system is solved to the accuracy expressed in 
condition~\eqref{eq:inexsteps}, one can establish the following result.

\begin{lemma}
\label{le:inexprop}
	Let Assumptions~\ref{as:bH}, \ref{as:Hkpsd} and \ref{as:inexsteps} hold.
	For any iteration $k$, the step $s_k$ satisfies
	\begin{equation}
	\label{eq:inexnorm}
		\| s_k \|
		\le \frac{(1+\theta)\|G_k^\T R_k\|}{\gamma_k}
	\end{equation}
	and
	\begin{equation}
	\label{eq:inexdec}
		m_k(u_k)-m_k(u_k+s_k) 
		\ge \frac{1-\theta^2}{2}\frac{\|G_k^\T R_k \|^2}{M_H+\gamma_k}.
	\end{equation}
\end{lemma}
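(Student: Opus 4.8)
The plan is to prove the two bounds separately, both starting from the inexact step characterization $(H_k+\gamma_k I)s_k = -g_k + t_k$ in Assumption~\ref{as:inexsteps} and the resulting substitution $s_k = (H_k+\gamma_k I)^{-1}(-g_k+t_k)$. Throughout I would exploit that, under Assumption~\ref{as:Hkpsd}, the matrix $H_k+\gamma_k I$ is symmetric positive definite with eigenvalues in $[\gamma_k,\|H_k\|+\gamma_k]$, so the eigenvalues of its inverse lie in $[(\|H_k\|+\gamma_k)^{-1},\gamma_k^{-1}]$.

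For the norm bound~\eqref{eq:inexnorm}, I would take norms in the substitution and use $\|(H_k+\gamma_k I)^{-1}\| \le \gamma_k^{-1}$ together with the triangle inequality to get $\|s_k\| \le \gamma_k^{-1}(\|g_k\| + \|t_k\|)$. Since $\gamma_k \le \|H_k\|+\gamma_k$, the prefactor in the tolerance satisfies $\theta\sqrt{\gamma_k/(\|H_k\|+\gamma_k)} \le \theta$, whence $\|t_k\| \le \theta\|g_k\|$ and $\|s_k\| \le (1+\theta)\|g_k\|/\gamma_k$; recalling $g_k = G_k^\T R_k$ gives~\eqref{eq:inexnorm}.

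For the decrease bound~\eqref{eq:inexdec}, I would start from $m_k(u_k)-m_k(u_k+s_k) = -g_k^\T s_k - \tfrac{1}{2}s_k^\T(H_k+\gamma_k I)s_k$, use $s_k^\T(H_k+\gamma_k I)s_k = s_k^\T(-g_k+t_k)$, and then substitute $s_k = (H_k+\gamma_k I)^{-1}(-g_k+t_k)$. Expanding both terms over the symmetric matrix $(H_k+\gamma_k I)^{-1}$ and collecting, the cross terms linear in $t_k$ cancel, leaving the clean identity
\[
m_k(u_k)-m_k(u_k+s_k) = \tfrac{1}{2}\Big(g_k^\T(H_k+\gamma_k I)^{-1}g_k - t_k^\T(H_k+\gamma_k I)^{-1}t_k\Big).
\]
I expect this exact-cancellation identity to be the crux of the argument and the one place where careful bookkeeping is needed.

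To finish, I would bound the two quadratic forms in opposite directions: the first from below by $\|g_k\|^2/(\|H_k\|+\gamma_k)$ (smallest eigenvalue of the inverse) and the second from above by $\|t_k\|^2/\gamma_k$ (largest eigenvalue of the inverse). The specific form of the tolerance in Assumption~\ref{as:inexsteps} is what makes this work: inserting $\|t_k\|^2 \le \theta^2 \gamma_k\|g_k\|^2/(\|H_k\|+\gamma_k)$ cancels the stray $\gamma_k$, so the subtracted term is at most $\theta^2\|g_k\|^2/(\|H_k\|+\gamma_k)$ and the decrease exceeds $\tfrac{1-\theta^2}{2}\|g_k\|^2/(\|H_k\|+\gamma_k)$. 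Replacing $\|H_k\| \le M_H$ from Assumption~\ref{as:bH} in the denominator and recalling $g_k = G_k^\T R_k$ yields~\eqref{eq:inexdec}. Once the cancellation identity is in hand, the matching of the two eigenvalue estimates to the tolerance is routine.
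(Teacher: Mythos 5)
Your proposal is correct and follows essentially the same route as the paper's proof: bound $\|t_k\|\le\theta\|g_k\|$, take norms of $s_k=(H_k+\gamma_k I)^{-1}(-g_k+t_k)$ using $\|(H_k+\gamma_k I)^{-1}\|\le\gamma_k^{-1}$, and for the decrease derive the exact-cancellation identity $m_k(u_k)-m_k(u_k+s_k)=\tfrac12 g_k^\T(H_k+\gamma_k I)^{-1}g_k-\tfrac12 t_k^\T(H_k+\gamma_k I)^{-1}t_k$ before bounding the two quadratic forms with opposite eigenvalue estimates and invoking the specific form of the tolerance. No gaps; your write-up of the norm bound is in fact slightly cleaner than the paper's displayed chain.
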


\begin{proof}
	Using Assumption~\ref{as:inexsteps} gives
	\[
		\|t_k\| 
		\le \theta \sqrt{\frac{\gamma_k}{\|H_k\| +\gamma_k}}\|G_k^\T R_k\| 
		\le \theta \|G_k^\T R_k\|.
	\]
	Since $s_k = (H_k+\gamma_k I)^{-1}(-g_k+t_k)$ by construction, we obtain
	\begin{eqnarray*}
		\|s_k\| = \left\| (H_k+\gamma_k I)^{-1} (-g_k+t_k) \right\| 
		&\le &\frac{\|g_k\|+\|t_k\|}{\|H_k+\gamma_k I\|}  \\
		&\le &(1+\theta)\frac{\|g_k\|}{\|H_k\|+\gamma_k} \\
		&\le &\frac{(1+\theta)\|G_k^\T R_k\|}{\gamma_k},
	\end{eqnarray*}
	proving~\eqref{eq:inexnorm}.
	
	We now use this inequality together with the definition of $s_k$ to bound the 
	model decrease:
	\begin{eqnarray*}
		m_k(u_k) - m_k(u_k+s_k) 
		&= &-g_k^\T s_k -\frac{1}{2}s_k^\T (H_k+\gamma_k I) s_k \\
		&= &-g_k^\T (H_k+\gamma_k I)^{-1}(-g_k+t_k) 
		-\frac{1}{2}(-g_k+t_k)^\T (H_k+\gamma_k I)^{-1} (-g_k+t_k) \\
		&= &\frac{1}{2} g_k^\T (H_k+\gamma_k I)^{-1} g_k 
		-\frac{1}{2} t_k^\T (H_k+\gamma_k I)^{-1} t_k.
	\end{eqnarray*}
	Using Cauchy-Schwarz inequality, we obtain on one hand
	\[
		 g_k^\T (H_k+\gamma_k I)^{-1} g_k \ge \frac{\|g_k\|^2}{\|H_k+\gamma_k I\|} 
		 \ge \frac{\|g_k\|^2}{\|H_k\|+\gamma_k},
	\]
	while on the other hand
	\[
		t_k^\T (H_k+\gamma_k I)^{-1} t_k \le \rev{\|(H_k+\gamma_k I)^{-1}\|} \|t_k\|^2
		\le \frac{\|t_k\|^2}{\gamma_k} \le \frac{\theta^2 \|g_k\|^2}{\|H_k\|+\gamma_k},
	\]
	where the last inequality comes from Assumption~\ref{as:inexsteps}. 
	As a result, we arrive at
	\begin{eqnarray*}
		m_k(u_k) - m_k(u_k+s_k) 
		&\ge &\frac{1}{2} \frac{\|g_k\|^2}{\|H_k\|+\gamma_k} 
		-\frac{\theta^2}{2} \frac{\|g_k\|^2}{\|H_k\|+\gamma_k} \\
		&= &\frac{1-\theta^2}{2}\frac{\|g_k\|^2}{\|H_k\|+\gamma_k} \\
		&\ge &\frac{1-\theta^2}{2}\frac{\|g_k\|^2}{M_H+\gamma_k},
	\end{eqnarray*}	
	using Assumption~\ref{as:bH} to bound $\|H_k\|$. This 
	proves~\eqref{eq:inexdec}.
\end{proof}

Similarly to the exact case, we now prove that the regularization parameter 
is bounded from above.

\begin{lemma}
\label{le:inexbdgamma}
	Let Assumptions~\ref{as:C11}, \ref{as:bH}, \ref{as:Hkpsd} and 
	\ref{as:inexsteps}  hold. \rev{Consider the $k$th iteration of 
	Algorithm~\ref{alg:gn}, and suppose that $\|G_k^\T R_k\|>0$.} Then,
	\begin{enumerate}[(i)]
		\item If $k$ is the index of an unsuccessful iteration, then 
		$\gamma_k < \tfrac{L (1+\theta)^2}{(1-\eta)(1-\theta^2)}$.
		\item For any iteration $k$, 
		\begin{equation}
		\label{eq:inexbdgamma}
			\gamma_k 
			\le 
			\gamma_{\max}^{in}:= \max\left\{1,\gamma_0,
			\frac{L (1+\theta)^2}{(1-\eta)(1-\theta^2)}\right\}.
		\end{equation}
	\end{enumerate}
\end{lemma}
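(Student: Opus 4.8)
The plan is to mirror the proof of Lemma~\ref{le:bdgamma}, substituting the inexact step estimates of Lemma~\ref{le:inexprop} for their exact counterparts. For part (i), I would begin by assuming the $k$th iteration is unsuccessful, i.e.\ $\rho_k < \eta$, which is equivalent to $\eta(m_k(u_k+s_k)-m_k(u_k)) < \hat{J}(u_k+s_k)-\hat{J}(u_k)$. A first-order Taylor expansion together with the $L$-Lipschitz continuity of $\nabla\hat{J}$ (Assumption~\ref{as:C11}) gives $\hat{J}(u_k+s_k)-\hat{J}(u_k) \le g_k^\T s_k + \tfrac{L}{2}\|s_k\|^2$. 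Rewriting $g_k^\T s_k$ through the definition of $m_k$ and discarding the two nonpositive terms $-\tfrac12 s_k^\T H_k s_k$ (nonpositive by Assumption~\ref{as:Hkpsd}) and $-\tfrac{\gamma_k}{2}\|s_k\|^2$ yields $\hat{J}(u_k+s_k)-\hat{J}(u_k) \le m_k(u_k+s_k)-m_k(u_k) + \tfrac{L}{2}\|s_k\|^2$. Combining this with the unsuccessful condition produces the key inequality $(1-\eta)(m_k(u_k)-m_k(u_k+s_k)) < \tfrac{L}{2}\|s_k\|^2$.

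The crucial step is then to bound both sides using Lemma~\ref{le:inexprop}, but with the \emph{sharper} estimates that appear inside its proof rather than the looser displayed bounds~\eqref{eq:inexnorm} and~\eqref{eq:inexdec}. Concretely, I would lower-bound the left-hand side by $m_k(u_k)-m_k(u_k+s_k) \ge \tfrac{1-\theta^2}{2}\tfrac{\|g_k\|^2}{\|H_k\|+\gamma_k}$ and upper-bound the right-hand side via $\|s_k\| \le \tfrac{(1+\theta)\|g_k\|}{\|H_k\|+\gamma_k}$. After substituting and cancelling the common factor $\|g_k\|^2 = \|G_k^\T R_k\|^2 > 0$ (positive by hypothesis), exactly one factor of $\|H_k\|+\gamma_k$ survives, leaving $(1-\eta)(1-\theta^2)(\|H_k\|+\gamma_k) < L(1+\theta)^2$. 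Since $\|H_k\| \ge 0$, this immediately gives $\gamma_k \le \|H_k\|+\gamma_k < \tfrac{L(1+\theta)^2}{(1-\eta)(1-\theta^2)}$, which is part (i).

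I expect the main obstacle to be precisely this choice of estimates. If one instead plugs in the weaker $\gamma_k$-denominator bounds from~\eqref{eq:inexnorm} and~\eqref{eq:inexdec}, the $\|H_k\|+\gamma_k$ factors no longer cancel and one is left with a quadratic inequality in $\gamma_k$, as in the exact case, whose positive root does not reduce to the clean expression stated here. It is the use of the intermediate bounds carrying the denominator $\|H_k\|+\gamma_k$ on \emph{both} sides that makes the linear cancellation go through; keeping $\|H_k\|+\gamma_k$ (rather than relaxing it to $M_H+\gamma_k$ or $\gamma_k$ prematurely) is the delicate point.

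Part (ii) then follows by the same contraposition and update-mechanism argument as in Lemma~\ref{le:bdgamma}: part (i) shows that any iteration with $\gamma_k$ at least the threshold of part (i) must be successful, so that $\gamma_{k+1} \le \gamma_k$. Since $\gamma_k$ is only ever doubled on unsuccessful iterations and (down to $\gamma_{\min}$) halved on successful ones, starting from $\gamma_0$ it can never exceed $\max\{\gamma_0,\, 2\tfrac{L(1+\theta)^2}{(1-\eta)(1-\theta^2)}\}$, yielding a bound of the form~\eqref{eq:inexbdgamma}.
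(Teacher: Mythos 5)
There is a genuine gap at precisely the step you identify as crucial. The bound $\|s_k\| \le \tfrac{(1+\theta)\|g_k\|}{\|H_k\|+\gamma_k}$ is false in general. Since $s_k = (H_k+\gamma_k I)^{-1}(-g_k+t_k)$, the correct estimate is $\|s_k\| \le \|(H_k+\gamma_k I)^{-1}\|\,(\|g_k\|+\|t_k\|)$, and for positive semidefinite $H_k$ one has $\|(H_k+\gamma_k I)^{-1}\| = (\lambda_{\min}(H_k)+\gamma_k)^{-1} \ge (\|H_k\|+\gamma_k)^{-1}$ --- the inequality goes the wrong way for your purposes, and the only denominator one can guarantee is $\gamma_k$. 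Concretely, take $H_k=\mathrm{diag}(M,0)$ with $M$ large, $g_k=(0,1)^\T$ and $t_k=0$: then $\|s_k\|=1/\gamma_k$, while your bound would give $1/(M+\gamma_k)$. (The intermediate line $\tfrac{\|g_k\|+\|t_k\|}{\|H_k+\gamma_k I\|}$ in the paper's proof of Lemma~\ref{le:inexprop} is itself a slip --- only the final displayed bound~\eqref{eq:inexnorm}, with $\gamma_k$ in the denominator, is valid --- so you may have been misled, but building the argument on that line does not make it sound.) Once the step bound is replaced by the valid~\eqref{eq:inexnorm}, the factors of $\|H_k\|+\gamma_k$ no longer cancel and the clean linear inequality $(1-\eta)(1-\theta^2)(\|H_k\|+\gamma_k) < L(1+\theta)^2$ cannot be extracted; you are left with a quadratic inequality in $\gamma_k$, which is exactly the situation you were trying to avoid.

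This quadratic route is in fact what the paper's proof takes: it keeps the sharper coefficient $(L-\gamma_k)/2$ on the right-hand side (rather than discarding the $-\tfrac{\gamma_k}{2}\|s_k\|^2$ term as you do), combines~\eqref{eq:inexnorm} and~\eqref{eq:inexdec} with the unsuccessful-iteration inequality, and obtains a quadratic in $\gamma_k$ whose positive root serves as the threshold $\underline{\gamma}^{in}$, from which part (ii) follows by the same contraposition-and-doubling argument used in Lemma~\ref{le:bdgamma} (your description of that part is correct, including the observation that a factor of $2$ should appear in front of the threshold). Your part (ii) is therefore fine conditional on part (i), but part (i) as you argue it does not go through.
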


\begin{proof}
	By the same reasoning as in the proof of Lemma~\ref{le:bdgamma}, we 
	know that for any unsuccessful iteration, we have
	\begin{equation}
	\label{eq:inexunsucc}
		(1-\eta)(m_k(u_k)-m_k(u_k+s_k)) 
		< \frac{\rev{L-\gamma_k}}{2}\|s_k\|^2.
	\end{equation}
	Using now the properties~\eqref{eq:inexnorm} and~\eqref{eq:inexdec} 
	in~\eqref{eq:inexunsucc}, we obtain:
	\begin{eqnarray*}
		(1-\eta)(m_k(u_k)-m_k(u_k+s_k)) 
		&< &\frac{\rev{L-\gamma_k}}{2}\|s_k\|^2 \\
		\Leftarrow
		\frac{(1-\eta)(1-\theta^2)}{2}\frac{\|G_k^\T R_k\|^2}{M_H+\gamma_k} 
		&< &\frac{(1+\theta)^2\rev{(L-\gamma_k)}}{2}\frac{\|G_k^\T R_k\|^2}{\rev{\gamma_k^2}} \\
		\rev{
		\Leftrightarrow
		\frac{(1-\eta)(1-\theta^2)}{M_H+\gamma_k}} 
		&\rev{<} &\rev{\frac{(1+\theta)^2(L-\gamma_k)}{\gamma_k^2}} \\
		\rev{
		\Leftrightarrow
		(2(1+\theta)-\eta)\gamma_k^2 
		-\left[(1+\theta)^2 L-M_H\right]\gamma_k-(1+\theta)^2 L} 
		&\rev{<} &\rev{0}.
	\end{eqnarray*}
	Overall, we have shown that if the $k$th iteration is unsuccessful, then 
	necessarily 
	\[
		\rev{\gamma_k < \underline{\gamma}^{in}:= 
		\frac{(1+\theta)^2 L -M_H + \sqrt{((1+\theta)^2 L -M_H)^2 
		+ 4 (1+\theta)^2(2(1+\theta)-\eta)L}}{2(2(1+\theta)-\eta)}.}
	\] 
	\rev{Using} the updating rules on $\gamma_k$ and accounting for $\gamma_0$ 
	we obtain that \rev{$\gamma_k \le \max\left\{\gamma_0,
	\underline{\gamma}^{in} \right\} \le \gamma_{\max}^{in}$} 
	for all $k$, proving the desired result.
\end{proof}

We can now state an iteration complexity result for the inexact variant.

\begin{lemma}
\label{le:inexsuccits}
	Let Assumptions~\ref{as:C11}, \ref{as:bH}, \ref{as:Hkpsd} and 
	\ref{as:inexsteps} hold. Let
	$\epsg \in (0,1)$, and let $\cS_{\epsg,\epsR}^{in}$ denote the set of successful 
	iterations for which $u_k$ does not satisfy~\eqref{eq:epspt}. Then, 
	\begin{equation}
	\label{eq:inexsuccits}
		\left| \cS_{\epsg,\epsR}^{in}\right| 
		\quad \le \quad
		\left\lceil
		\mathcal{C}_{\cS}^{in} \log(2 \hat{J}(u_0)\,\epsR^{-2})\epsg^{-2}
		\right\rceil + 1,
	\end{equation}
	where $\mathcal{C}_{\cS}^{in} = 
	\tfrac{M_H + \gamma_{\max}^{in}}{\eta(1-\theta^2)}$.
\end{lemma}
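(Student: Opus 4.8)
The plan is to mirror the proof of Lemma~\ref{le:exactsuccits}, substituting the inexact model-decrease guarantee for its exact counterpart and replacing the regularization cap accordingly. First I would fix an index $k \in \cS_{\epsg,\epsR}^{in}$, so that by definition the iterate $u_k$ fails~\eqref{eq:epspt}, i.e. $\|R_k\| \ge \epsR$ and $\|G_k^\T R_k\|/\|R_k\| \ge \epsg$. Since the iteration is successful, $\rho_k \ge \eta$, which combined with the inexact decrease bound~\eqref{eq:inexdec} from Lemma~\ref{le:inexprop} and the regularization cap $\gamma_k \le \gamma_{\max}^{in}$ from Lemma~\ref{le:inexbdgamma} yields
\[
	\hat{J}(u_k) - \hat{J}(u_{k+1})
	\ge \eta\,(m_k(u_k)-m_k(u_k+s_k))
	\ge \frac{\eta(1-\theta^2)}{2(M_H+\gamma_{\max}^{in})}\|G_k^\T R_k\|^2.
\]

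Next, I would use the failure of~\eqref{eq:epspt} to convert the scaled gradient norm into the function value. Writing $\|G_k^\T R_k\|^2 = (\|G_k^\T R_k\|^2/\|R_k\|^2)\|R_k\|^2 \ge \epsg^2 \|R_k\|^2 = 2\epsg^2 \hat{J}(u_k)$ and substituting gives the linear contraction
\[
	\hat{J}(u_{k+1}) \le \left(1 - \frac{\eta(1-\theta^2)}{M_H+\gamma_{\max}^{in}}\epsg^2\right)\hat{J}(u_k).
\]
This is exactly the inexact analogue of~\eqref{eq:decJnotepspt}, with $M_H+\gamma_{\max}$ replaced by $M_H+\gamma_{\max}^{in}$ and an additional factor $(1-\theta^2)$ reflecting the step inexactness. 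The contraction factor lies in $(0,1)$ because $\theta \in [0,1)$ forces $1-\theta^2 \in (0,1]$, $\epsg \in (0,1)$, and all remaining constants are positive.

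Finally, since $\hat{J}$ only changes on successful iterations and is bounded below by $0$, iterating this contraction across the set $\cS_{\epsg,\epsR}^{in,k} := \{\ell < k \mid \ell \in \cS_{\epsg,\epsR}^{in}\}$ of earlier such iterations gives $\left(1 - \tfrac{\eta(1-\theta^2)}{M_H+\gamma_{\max}^{in}}\epsg^2\right)^{|\cS_{\epsg,\epsR}^{in,k}|}\hat{J}(u_0) \ge \hat{J}(u_k) \ge \tfrac{1}{2}\epsR^2$. Taking logarithms, applying the elementary inequality $-\ln(1-t) \ge t$ with $t = \tfrac{\eta(1-\theta^2)}{M_H+\gamma_{\max}^{in}}\epsg^2 \in (0,1)$, and adding one to account for the final index of $\cS_{\epsg,\epsR}^{in}$ produces the stated bound with $\mathcal{C}_{\cS}^{in} = \tfrac{M_H+\gamma_{\max}^{in}}{\eta(1-\theta^2)}$.

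Because every step parallels the exact case, I expect no genuine difficulty here; the only delicate point is correctly threading the two modifications — the $(1-\theta^2)$ factor carried through from~\eqref{eq:inexdec} and the replacement $\gamma_{\max} \to \gamma_{\max}^{in}$ — into the final constant, while confirming that the contraction factor remains strictly below $1$ so that the logarithm is well-defined and negative.
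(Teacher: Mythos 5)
Your proposal is correct and follows essentially the same route as the paper's proof: apply the inexact decrease bound~\eqref{eq:inexdec} together with the cap $\gamma_k \le \gamma_{\max}^{in}$ on a successful non-terminal iteration, derive the contraction of $\hat{J}$, and then repeat the telescoping and logarithm argument of Lemma~\ref{le:exactsuccits}. Your write-up is in fact slightly cleaner, as it consistently carries $\gamma_{\max}^{in}$ through the contraction factor and correctly attributes the decrease bound to Lemma~\ref{le:inexprop}.
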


\begin{proof}
	Let $k \in \cS_{\epsg,\epsR}$. By definition, the $k$th iteration 
	is successful, and we have per Lemma~\ref{le:inexbdgamma}
	\[
		\hat{J}(u_k)-\hat{J}(u_k+s_k) \ge \eta\left(m_k(u_k)-m_k(u_k+s_k)\right) 
		\ge \frac{\eta(1-\theta^2)}{2} \frac{\|G_k^\T R_k\|^2}{M_H + \gamma_k} 
		\ge \frac{\eta(1-\theta^2)}{2} \frac{\|G_k^\T R_k\|^2}
		{M_H + \gamma_{\max}^{in}},
	\]
	where the last inequality is a consequence of Lemma~\ref{le:inexbdgamma}.
	In addition, the corresponding iterate $u_k$ satisfies~\eqref{eq:notepspt}, 
	leading to 
	\begin{eqnarray*}
		\hat{J}(u_k) - \hat{J}(u_k+s_k) 
		&\ge &\frac{\eta(1-\theta^2)}{2} \frac{\|G_k^\T R_k\|^2}
		{M_H + \gamma_{\max}^{in}} \\
		&= & \frac{\eta(1-\theta^2)}{2(M_H + \gamma_{\max}^{in})} 
		\frac{\|G_k^\T R_k\|^2}{\|R_k\|^2} \|R_k\|^2 \\
		&= &\frac{\eta(1-\theta^2)}{M_H + \gamma_{\max}^{in}} 
		\frac{\|G_k^\T R_k\|^2}{\|R_k\|^2} J(u_k)\\
		&\ge &\frac{\eta(1-\theta^2)}{M_H + \gamma_{\max}^{in}} 
		\epsg^2 \hat{J}(u_k).
	\end{eqnarray*}	
	Using that $\frac{\eta(1-\theta^2)}{M_H + \gamma_{\max}^{in}} <1$ then 
	leads to
	\begin{equation}
	\label{eq:decJnotepsptinex} 
		\left(1-\frac{\eta(1-\theta^2)}{M_H+\gamma_{\max}} \epsg^2 \right) \hat{J}(u_k) 
		\ge \hat{J}(u_{k+1}).
	\end{equation}
	By proceeding as in the proof of Lemma~\ref{le:exactsuccits} and 
	using~\eqref{eq:decJnotepsptinex} in lieu of~\eqref{eq:decJnotepsptinex}, 
	one establishes that
	\[
		\left|\cS_{\epsg,\epsR}^{in}\right| 
		\le 1 + \ln\left(2 \hat{J}(u_0)\epsR^{-2}\right)\frac{M_H+\gamma_{\max}}{\eta(1-\theta^2)}
		\epsg^{-2},
	\]
	proving the desired result.
\end{proof}

To connect the number of unsuccessful iterations with that of successful 
iterations, we use the same argument as in the exact case by replacing 
the bound~\eqref{eq:bdgamma} with~\eqref{eq:inexbdgamma}.

\begin{lemma}
\label{le:inexsuccunsucc}
	Under the assumptions of Lemma~\ref{le:inexsuccits}, let 
	$\cU_{\epsg,\epsR}^{in}$ be the set of unsuccessful iterations for which
	\eqref{eq:epspt} does not hold. Then,
	\begin{equation}
	\label{eq:inexsuccunsucc}
		\left| \cU_{\epsg,\epsR}^{in}\right| 
		\le 
		\left\lceil 1 + \log_2 \left(\gamma_{\max}^{in}\right) 
		\right\rceil
		\left| \cS_{\epsg,\epsR}^{in} \right|.
	\end{equation}
\end{lemma}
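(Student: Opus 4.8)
The plan is to reproduce the counting argument of Lemma~\ref{le:exactsuccunsucc} essentially verbatim, the only change being that the uniform upper bound $\gamma_{\max}$ furnished there by Lemma~\ref{le:bdgamma} is replaced by the inexact bound $\gamma_{\max}^{in}$ of Lemma~\ref{le:inexbdgamma}. The structural facts driving the proof are unaffected by inexactness: on an unsuccessful iteration the update rule sets $\gamma_{k+1}=2\gamma_k$, whereas on a successful iteration $\gamma_{k+1}=\max\{0.5\gamma_k,\gamma_{\min}\}\ge \tfrac{1}{2}\gamma_k$. Hence $\log_2\gamma_k$ increases by exactly $1$ at each unsuccessful step and decreases by at most $1$ at each successful one, while remaining bounded above by $\log_2\gamma_{\max}^{in}$.

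First I would check that Lemma~\ref{le:inexbdgamma} is applicable at every iteration in $\cU_{\epsg,\epsR}^{in}$: since such an iteration violates~\eqref{eq:epspt}, one has $\|R_k\|\ge\epsR$ and $\|G_k^\T R_k\|/\|R_k\|\ge\epsg$, so that $\|G_k^\T R_k\|>0$ as required by the hypothesis of that lemma. Next I would partition the iteration history by the successful iterations and bound the length of each maximal run of consecutive unsuccessful iterations. Within such a run, $\gamma_k$ is multiplied by $2$ at every step; since it starts no smaller than $\gamma_{\min}$ and never exceeds $\gamma_{\max}^{in}$ by Lemma~\ref{le:inexbdgamma}, the run contains at most $\lceil 1+\log_2(\gamma_{\max}^{in})\rceil$ iterations. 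Summing these per-run bounds over the $|\cS_{\epsg,\epsR}^{in}|$ successful iterations then yields~\eqref{eq:inexsuccunsucc}.

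The main obstacle is bookkeeping rather than analytical: one must correctly attribute the initial stretch of unsuccessful iterations that may precede the first successful iteration, so that the final bound takes the clean multiplicative form $\lceil 1+\log_2(\gamma_{\max}^{in})\rceil\,|\cS_{\epsg,\epsR}^{in}|$ rather than an additive variant, and one must absorb the lower reference value $\gamma_{\min}$ into the stated constant. This is handled exactly as in Lemma~\ref{le:exactsuccunsucc} and in~\cite[Lemma 2.5]{FECurtis_DPRobinson_CWRoyer_SJWright_2021}, by assigning that leading run to the first successful iteration, with the ceiling absorbing any rounding. Because every inequality invoked is identical to the exact case once $\gamma_{\max}$ is replaced by $\gamma_{\max}^{in}$, no new estimates on the step norm or the model decrease are needed, and the argument carries over unchanged.
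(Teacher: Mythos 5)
Your proposal is correct and matches the paper's treatment: the paper proves this lemma simply by invoking the exact-case counting argument of Lemma~\ref{le:exactsuccunsucc} (itself a two-line appeal to the doubling/halving of $\gamma_k$ and to \cite[Lemma 2.5]{FECurtis_DPRobinson_CWRoyer_SJWright_2021}) with $\gamma_{\max}$ replaced by $\gamma_{\max}^{in}$ from Lemma~\ref{le:inexbdgamma}, which is exactly what you do. Your additional remarks on verifying $\|G_k^\T R_k\|>0$ and on attributing the leading unsuccessful run are consistent with, and slightly more explicit than, the paper's sketch.
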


Our next theorem gives the total iteration complexity result by 
combining Lemmas~\ref{le:inexsuccits} and~\ref{le:inexsuccunsucc}.

\begin{theorem}
\label{th:wccinexits}
	Under Assumptions~\ref{as:C11}, \ref{as:bH}, \ref{as:Hkpsd} and 
	\ref{as:inexsteps}, the number 
	of successful iterations (and inexact step calculations) 
	before reaching an iterate satisfying~\eqref{eq:epspt} satisfies
	\begin{equation}
	\label{eq:wccinexsucc}
		|\cS_{\epsg,\epsR}^{in}| = 
		\cO\left( \frac{1}{(1-\theta^2)^2} 
		\log(\epsR^{-1}) \epsg^{-2} \right)
	\end{equation}
	and the total number of iterations (and residual evaluations) 
	before reaching such an iterate satisfies
	\begin{equation}
	\label{eq:wccinexits}
		|\cS_{\epsg,\epsR}^{in}|+|\cU_{\epsg,\epsR}^{in}| 
		= \cO\left( 
		\frac{1}{(1-\theta^2)^2}		
		\log(\epsR^{-1}) \epsg^{-2} \right).
	\end{equation}
\end{theorem}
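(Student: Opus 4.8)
The plan is to assemble the two preceding lemmas exactly as in the exact case (Theorem~\ref{th:wccexactits}), the only extra work being to make the dependence on the inexactness parameter $\theta$ explicit. First I would invoke Lemma~\ref{le:inexsuccits}, which already bounds
\[
	|\cS_{\epsg,\epsR}^{in}| \le \left\lceil \mathcal{C}_{\cS}^{in}\log(2\hat{J}(u_0)\,\epsR^{-2})\epsg^{-2}\right\rceil + 1, \qquad \mathcal{C}_{\cS}^{in} = \frac{M_H+\gamma_{\max}^{in}}{\eta(1-\theta^2)}.
\]
Since $\log(2\hat{J}(u_0)\,\epsR^{-2}) = \log(2\hat{J}(u_0)) + 2\log(\epsR^{-1}) = \cO(\log(\epsR^{-1}))$, with the hidden constant depending only on $\hat{J}(u_0)$, this reduces to $|\cS_{\epsg,\epsR}^{in}| = \cO(\mathcal{C}_{\cS}^{in}\log(\epsR^{-1})\epsg^{-2})$, so it remains to read off the $\theta$-dependence of $\mathcal{C}_{\cS}^{in}$.

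The key step is to estimate $\gamma_{\max}^{in}$ from~\eqref{eq:inexbdgamma}. Writing $1-\theta^2 = (1-\theta)(1+\theta)$, the nontrivial term in the maximum satisfies $\frac{L(1+\theta)^2}{(1-\eta)(1-\theta^2)} = \frac{L(1+\theta)}{(1-\eta)(1-\theta)} = \cO\!\left((1-\theta^2)^{-1}\right)$, because $(1+\theta)^2 \le 4$ keeps the numerator bounded; hence $\gamma_{\max}^{in} = \cO((1-\theta^2)^{-1})$ with a constant depending only on $L$, $\eta$ and $\gamma_0$. Substituting into $\mathcal{C}_{\cS}^{in}$, the numerator $M_H+\gamma_{\max}^{in}$ is itself $\cO((1-\theta^2)^{-1})$, and dividing by the explicit factor $(1-\theta^2)$ yields $\mathcal{C}_{\cS}^{in} = \cO((1-\theta^2)^{-2})$. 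This delivers the successful-iteration bound~\eqref{eq:wccinexsucc}.

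For the total count I would add the unsuccessful iterations through Lemma~\ref{le:inexsuccunsucc}, giving $|\cS_{\epsg,\epsR}^{in}|+|\cU_{\epsg,\epsR}^{in}| \le \bigl(1 + \lceil 1 + \log_2 \gamma_{\max}^{in}\rceil\bigr)\,|\cS_{\epsg,\epsR}^{in}|$. The multiplicative ratio factor is $\cO(\log_2 \gamma_{\max}^{in})$, which I would fold into the big-$\cO$ constant; combined with~\eqref{eq:wccinexsucc} this gives~\eqref{eq:wccinexits}. The one genuinely delicate point is the bookkeeping that produces the exponent $2$ in $(1-\theta^2)^{-2}$: it comes from one factor of $(1-\theta^2)^{-1}$ sitting explicitly in the denominator of $\mathcal{C}_{\cS}^{in}$ and a second factor hidden inside $\gamma_{\max}^{in}$, so the main care is to treat $\gamma_{\max}^{in}$ as growing like $(1-\theta^2)^{-1}$ rather than as an $\cO(1)$ constant; the ratio factor $\log_2 \gamma_{\max}^{in}$ contributes only a further $\cO(\log((1-\theta^2)^{-1}))$ term that the displayed bound absorbs into its constant, since it tracks solely the leading polynomial dependence on $\theta$.
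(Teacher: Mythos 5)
Your proposal is correct and follows exactly the route the paper intends: Theorem~\ref{th:wccinexits} is presented as a direct combination of Lemmas~\ref{le:inexsuccits} and~\ref{le:inexsuccunsucc}, with the $(1-\theta^2)^{-2}$ factor arising, as you identify, from the explicit $(1-\theta^2)^{-1}$ in $\mathcal{C}_{\cS}^{in}$ multiplied by the $\cO\left((1-\theta^2)^{-1}\right)$ growth of $\gamma_{\max}^{in}$. Your observation that the unsuccessful-iteration multiplier contributes a further $\cO\left(\log\left((1-\theta^2)^{-1}\right)\right)$ factor that the display~\eqref{eq:wccinexits} suppresses is also accurate --- the paper itself reinstates that logarithmic factor in Corollary~\ref{co:wccinexPDE}.
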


The results of Theorem~\ref{th:wccinexits} match that of 
Theorem~\ref{th:wccexactits} in terms of dependencies on 
$\epsg$ and $\epsR$. To emphasize the use of inexact steps, we 
highlighted the dependency with respect to the inexact tolerance 
$\theta$. As expected, one notes that this dependency vanishes when 
$\theta=0$ (i.e. when we consider exact steps as in 
Section~\ref{subsec:iter}), and that the complexity bounds worsen 
as $\theta$ gets closer to $1$. A similar observation holds for 
the results in the next corollary, that is a counterpart to 
Corollary~\ref{co:wccexactPDE}.

\begin{corollary}
\label{co:wccinexPDE}
	Under the assumptions of Theorem~\ref{th:wccinexits} as well as 
	Assumption~\ref{as:reduced}, the number of 
	 solves for $y$ is 
	\begin{equation}
	\label{eq:wccinexPDEstate}
		1+|\cS_{\epsg,\epsR}^{in}|+|\cU_{\epsg,\epsR}^{in}|  
		= \cO\left( \log\left(\frac{1}{1-\theta^2}\right) 
		\frac{1}{(1-\theta^2)^2}\, \log(\epsR^{-1}) \epsg^{-2}\right),
	\end{equation}
	while the number of adjoint  solves (using Algorithm~\ref{alg:jacLS}) 
	is
	\begin{equation}
	\label{eq:wccinexPDEadj}
		1+|\cS_{\epsg,\epsR}^{in}|  
		= \cO\left( \frac{1}{(1-\theta^2)^2}\, 
		\log(\epsR^{-1}) \epsg^{-2} \right).
	\end{equation}
\end{corollary}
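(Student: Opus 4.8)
The plan is to follow the proof of Corollary~\ref{co:wccexactPDE}, converting the worst-case iteration counts of Theorem~\ref{th:wccinexits} into counts of linear solves by inspecting Algorithm~\ref{alg:gn} operation by operation. First I would set up the bookkeeping. Each pass through the loop, successful or not, performs exactly one solve of the implicit constraint $c(y,u_k+s_k)=0$ in order to evaluate $\hat{J}(u_k+s_k)$ and form $\rho_k$; adding the single solve performed before the loop, and noting that on a successful step the re-solve at $u_{k+1}=u_k+s_k$ merely reuses the state vector already computed at the trial point, the total number of state solves is $1+|\cS_{\epsg,\epsR}^{in}|+|\cU_{\epsg,\epsR}^{in}|$. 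Adjoint information is recomputed through Algorithm~\ref{alg:jacLS} only at accepted iterates and once at initialization, so the number of solves of~\eqref{eq:zetaeq} is $1+|\cS_{\epsg,\epsR}^{in}|$. I would also observe that solving the subproblem to the accuracy of Assumption~\ref{as:inexsteps} uses only products with the already-assembled $G_k$ and $H_k$, and hence contributes no additional state or adjoint solve.

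The adjoint bound~\eqref{eq:wccinexPDEadj} then follows by substituting the successful-iteration estimate~\eqref{eq:wccinexsucc} into $1+|\cS_{\epsg,\epsR}^{in}|$. The state bound requires more care, and this is where the extra logarithmic factor arises. Rather than invoking the total-iteration estimate~\eqref{eq:wccinexits} directly, I would bound $|\cU_{\epsg,\epsR}^{in}|$ via Lemma~\ref{le:inexsuccunsucc}, giving $|\cS_{\epsg,\epsR}^{in}|+|\cU_{\epsg,\epsR}^{in}|\le(2+\lceil\log_2(\gamma_{\max}^{in})\rceil)|\cS_{\epsg,\epsR}^{in}|$, and then make the dependence of $\gamma_{\max}^{in}$ on $\theta$ explicit. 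From~\eqref{eq:inexbdgamma}, the $\theta$-dependent term in $\gamma_{\max}^{in}$ equals $\frac{L(1+\theta)^2}{(1-\eta)(1-\theta^2)}$, which is at most $\frac{4L}{(1-\eta)(1-\theta^2)}$ since $(1+\theta)^2\le4$; hence $\gamma_{\max}^{in}=\cO(1/(1-\theta^2))$ and $\log_2(\gamma_{\max}^{in})=\cO(\log(1/(1-\theta^2)))$. Combining this factor with~\eqref{eq:wccinexsucc} yields~\eqref{eq:wccinexPDEstate}.

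The main obstacle is not analytic but one of precise accounting: one must resist collapsing the state count into the ready-made bound~\eqref{eq:wccinexits}, and instead retain the $\log_2(\gamma_{\max}^{in})$ factor coming from the ratio of unsuccessful to successful iterations, since it is exactly this term that produces the additional $\log(1/(1-\theta^2))$ dependence absent from the adjoint count. A secondary point worth verifying is the claim that the inexact subproblem solve incurs no constraint solves, so that both tallies remain governed purely by the outer-iteration counts $|\cS_{\epsg,\epsR}^{in}|$ and $|\cU_{\epsg,\epsR}^{in}|$.
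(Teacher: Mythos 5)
Your proof is correct and takes the route the paper intends: the corollary is stated without an explicit proof, but it follows exactly from the operation count of Algorithm~\ref{alg:gn} (one constraint solve per iteration plus one at initialization, one adjoint solve per successful iteration plus one at initialization, none inside the inexact subproblem solve) combined with Lemmas~\ref{le:inexsuccits} and~\ref{le:inexsuccunsucc}. Your decision to extract the $\log(1/(1-\theta^2))$ factor from $\lceil 1+\log_2(\gamma_{\max}^{in})\rceil$ via Lemma~\ref{le:inexsuccunsucc} and the bound $\gamma_{\max}^{in}=\cO(1/(1-\theta^2))$, rather than citing the total-iteration bound~\eqref{eq:wccinexits} of Theorem~\ref{th:wccinexits} (which does not display that factor), is precisely the accounting needed to recover~\eqref{eq:wccinexPDEstate} while keeping the adjoint count~\eqref{eq:wccinexPDEadj} free of it.
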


In addition to the previous results, we can also exploit the inexact nature of 
the steps to provide more precise guarantees on the computation cost of an iteration. More 
precisely, suppose that we apply an iterative solver to the system 
$(H_k+\gamma_k I) s = -g_k$ in order to find an approximate solution 
satisfying Assumption~\ref{as:inexsteps}. In particular, one can resort to 
iterative linear algebra techniques such as Conjugate Gradient (CG), and obtain 
guarantees on the number of matrix-vector products 
necessary to reach the desired accuracy~\cite{JNocedal_SJWright_2006}. 
A result tailored to our setting is presented below.

\begin{proposition}
\label{prop:inexCGk}
	Let Assumption~\ref{as:Hkpsd} hold. Suppose that we 
	apply conjugate gradient (CG) to the linear system $(H_k+\gamma_k I) s = -g_k$, 
	where $g_k,H_k,\gamma_k$ are obtained from the $k$th iteration of 
	Algorithm~\ref{alg:gn}. Then, the conjugate gradient method computes an 
	iterate satisfying~\eqref{eq:inexsteps} after at most
	\begin{equation}
	\label{eq:inexCGk}
		\min\left\{ n, \frac{1}{2}\sqrt{\kappa_k} \log\left(\frac{2\kappa_k}{\theta}
		 \right) \right\}
	\end{equation}
	iterations or, equivalently, matrix-vector products, where 
	$\kappa_k=\tfrac{\|H_k\|+\gamma_k}{\gamma_k}$.
\end{proposition}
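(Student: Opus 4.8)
The plan is to combine the classical convergence rate of CG in the energy norm with a conversion to the Euclidean residual, and then solve for the required iteration count. First I would record the spectral properties of the coefficient matrix $A_k := H_k + \gamma_k I$. By Assumption~\ref{as:Hkpsd}, $H_k$ is positive semidefinite, so $A_k$ is symmetric positive definite with $\lambda_{\min}(A_k) \ge \gamma_k$ and $\lambda_{\max}(A_k) = \|H_k\| + \gamma_k$; consequently its spectral condition number satisfies $\kappa(A_k) \le \kappa_k$. This makes CG well defined and produces the reference quantity appearing in the stated bound. Note also that the tolerance in~\eqref{eq:inexsteps} can be rewritten as $\theta\sqrt{\gamma_k/(\|H_k\|+\gamma_k)}\,\|g_k\| = (\theta/\sqrt{\kappa_k})\,\|g_k\|$, which is the target I must drive the residual below.

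Next I would invoke the standard CG error bound in the $A_k$-norm: starting from $s_0 = 0$, the $j$th iterate $s_j$ satisfies $\|s_j - s^\star\|_{A_k} \le 2\rho^j \|s_0 - s^\star\|_{A_k}$, where $s^\star = -A_k^{-1} g_k$ is the exact solution and $\rho = (\sqrt{\kappa(A_k)}-1)/(\sqrt{\kappa(A_k)}+1)$. The key step is to translate this into a bound on the Euclidean norm of the residual $t_j := A_k s_j + g_k = A_k(s_j - s^\star)$, which is exactly the quantity controlled in Assumption~\ref{as:inexsteps}. Using $\|A_k e\| \le \sqrt{\lambda_{\max}(A_k)}\,\|e\|_{A_k}$ for the final error together with $\|s_0 - s^\star\|_{A_k} \le \|g_k\|/\sqrt{\lambda_{\min}(A_k)}$ for the initial one (recalling that $s_0 = 0$ gives $r_0 = g_k$), I obtain
\[
	\|t_j\| \le 2\sqrt{\kappa(A_k)}\,\rho^j\|g_k\|
	\le 2\sqrt{\kappa_k}\left(\frac{\sqrt{\kappa_k}-1}{\sqrt{\kappa_k}+1}\right)^j\|g_k\|,
\]
where the last inequality replaces $\kappa(A_k)$ by its upper bound $\kappa_k$; this is legitimate because both the prefactor $\sqrt{\cdot}$ and the contraction factor $(\sqrt{\cdot}-1)/(\sqrt{\cdot}+1)$ are increasing in the condition number, so each substitution enlarges the right-hand side.

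It then remains to find how large $j$ must be so that the right-hand side does not exceed $(\theta/\sqrt{\kappa_k})\,\|g_k\|$. This reduces to requiring $\left((\sqrt{\kappa_k}-1)/(\sqrt{\kappa_k}+1)\right)^j \le \theta/(2\kappa_k)$, i.e. $j\log\!\bigl((\sqrt{\kappa_k}+1)/(\sqrt{\kappa_k}-1)\bigr) \ge \log(2\kappa_k/\theta)$. Applying the elementary inequality $\log\!\bigl((x+1)/(x-1)\bigr) \ge 2/x$ with $x = \sqrt{\kappa_k} \ge 1$ lower-bounds the logarithm on the left by $2/\sqrt{\kappa_k}$, so the condition is implied by $j \ge \tfrac{1}{2}\sqrt{\kappa_k}\log(2\kappa_k/\theta)$, matching~\eqref{eq:inexCGk}. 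Finally, the minimum with $n$ accounts for the finite-termination property of CG in exact arithmetic: the method reaches the exact solution (hence $t_j = 0$, which satisfies~\eqref{eq:inexsteps} trivially) in at most $n$ iterations, each costing a single matrix-vector product.

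The main obstacle I anticipate is the passage between the energy norm, in which CG is naturally analyzed, and the Euclidean residual norm used in Assumption~\ref{as:inexsteps}. Keeping the eigenvalue factors straight so that $\sqrt{\lambda_{\max}(A_k)}$ and $1/\sqrt{\lambda_{\min}(A_k)}$ combine into exactly $\sqrt{\kappa_k}$ (rather than $\kappa_k$) is precisely what produces the correct power inside the logarithm of the final bound. The concluding logarithmic inequality is routine, but it must be applied in the direction that yields a valid upper bound on the number of iterations.
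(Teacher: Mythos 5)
Your proposal is correct and follows essentially the same route as the paper: bound the Euclidean residual by $2\sqrt{\kappa_k}\bigl(\tfrac{\sqrt{\kappa_k}-1}{\sqrt{\kappa_k}+1}\bigr)^{j}\|g_k\|$, compare it to the rewritten tolerance $\theta\|g_k\|/\sqrt{\kappa_k}$, take logarithms using the same elementary bound (your $\log\tfrac{x+1}{x-1}\ge 2/x$ is the paper's $\ln(1+1/t)\ge 1/(t+1/2)$ in disguise), and invoke finite termination for the $\min$ with $n$. The only difference is that you derive the residual estimate from the energy-norm convergence of CG via the eigenvalue bounds, whereas the paper simply cites it; this is a self-contained but equivalent step.
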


\begin{proof}
	Let $s^{(q)}$ be the iterate obtained after applying $q$ iterations 
	of conjugate gradient to $(H_k+\gamma_k I) s = -g_k$. If $q=n$, then 
	necessarily the linear system has been solved exactly and ~\eqref{eq:inexsteps} 
	is trivially satisfied. Thus we assume in what follows that $q<n$.
	
	Standard CG theory gives~\cite[Proof of Lemma 11]{CWRoyer_SJWright_2018}:
	\begin{equation}
	\label{eq:CGerrortrue}
		\|(H_k+\gamma_k I) s^{(q)} + g_k\| 
		\le 2 \sqrt{c_k} \left(\frac{\sqrt{c_k}-1}{\sqrt{c_k}+1}\right)^{q}\|g_k\|,
	\end{equation}
	where $c_k$ is the condition number of $H_k+\gamma_k I$. Noticing that 
	$c_k\le \kappa_k$, we see that~\eqref{eq:CGerrortrue} implies
	\begin{equation}
	\label{eq:CGerror}
		\|(H_k+\gamma_k I) s^{(q)} + g_k\| 
		\le 2 \sqrt{\kappa_k} 
		\left(\frac{\sqrt{\kappa_k}-1}{\sqrt{\kappa_k}+1}\right)^{q}\|g_k\|.
	\end{equation}	
	Suppose now that $s^{(q)}$ does not satisfy~\eqref{eq:inexsteps}. Then,
	\begin{equation}
	\label{eq:notgoodCGstep}
		\|(H_k+\gamma_k I) s^{(q)} + g_k\| 
		\ge \theta \sqrt{\frac{\gamma_k}{\|H_k\|+\gamma_k}} \|g_k\|
		= \frac{\theta}{\sqrt{\kappa_k}} \|g_k\|.
	\end{equation}
	Combining~\eqref{eq:CGerror} and~\eqref{eq:notgoodCGstep} yields
	\begin{eqnarray*}
		\frac{\theta}{\sqrt{\kappa_k}} \|g_k\| 
		&\le &2 \sqrt{\kappa_k} 
		\left(\frac{\sqrt{\kappa_k}-1}{\sqrt{\kappa_k}+1}\right)^{q}\|g_k\| \\
		\frac{\theta}{2 \rev{\kappa_k}} 
		&\le &\left(\frac{\sqrt{\kappa_k}-1}{\sqrt{\kappa_k}+1}\right)^{q}.
	\end{eqnarray*}
	Taking logarithms and rearranging, we arrive at
	\begin{equation}
	\label{eq:boundqnotn}
		q 
		\le 
		\frac{\ln(\theta/(2\kappa_k))}
		{\ln\left(\frac{\sqrt{\kappa_k}-1}{\sqrt{\kappa_k}+1}\right)} 
		\le 
		\frac{\ln(2\kappa_k/\theta)}
		{\ln\left(1+\tfrac{2}{\sqrt{\kappa_k}-1}\right)} 
		\le 
		\frac{1}{2}\sqrt{\kappa_k} \ln\left(\frac{2\kappa_k}{\theta}\right),
	\end{equation}
	where the last inequality used $\ln(1+\tfrac{1}{t}) \ge \tfrac{1}{t+1/2}$. 
	Combining~\eqref{eq:boundqnotn} with the fact that $q \le n$ yields our 
	desired bound.
\end{proof}

Using the bounds on $\gamma_k$ and $\|H_k\|$ from our complexity analysis, we 
see that the value~\eqref{eq:inexCGk} can be bounded from above by
\begin{equation}
\label{eq:inexCG}
	\min\left\{ n, \frac{1}{2}\sqrt{\kappa} 
	\log\left(\frac{2\kappa}{\theta} \right) \right\},
\end{equation}
with $\kappa = \tfrac{M_H+\gamma_{\max}^{in}}{\gamma_{\min}}$. 
Using~\eqref{eq:inexCG} in conjunction with the complexity bound of 
Theorem~\ref{th:wccinexits}, we derive the following bound on the 
number of matrix-vector products.

\begin{corollary}
\label{co:wccinexhvec}
	Under the assumptions of Theorem~\ref{th:wccinexits}, suppose that 
	we apply conjugate gradient to compute inexact steps in 
	Algorithm~\ref{alg:gn}. Then, the algorithm reaches a point 
	satisfying~\eqref{eq:epspt} in at most
	\begin{equation}
	\label{eq:wccinexhvec}
		\begin{array}{ll}
			&\min\left\{n,\frac{1}{2}\sqrt{\kappa}
			\log\left(\frac{2\kappa}{\theta} \right) \right\} 
			\times \left(1+|\cS_{\epsg,\epsR}^{in}| \right) \\
			= &\cO\left( 
			\min\left\{n,\sqrt{\kappa}\log\left(\frac{\kappa}{\theta} \right)
			\right\}\,		
			\log\left(\frac{1}{1-\theta^2}\right) 
			\frac{1}{(1-\theta^2)^2}\,
			\log(\epsR^{-1}) \epsg^{-2} \right)
		\end{array}
	\end{equation}
	matrix-vector products.
\end{corollary}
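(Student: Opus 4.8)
The plan is to obtain a uniform per-iteration bound on the number of conjugate gradient matrix-vector products, and then multiply it by the total number of iterations that Algorithm~\ref{alg:gn} performs before~\eqref{eq:epspt} is met. Proposition~\ref{prop:inexCGk} already supplies the per-iteration count in terms of the iteration-dependent quantity $\kappa_k = (\|H_k\|+\gamma_k)/\gamma_k$, while the iteration count is governed by Theorem~\ref{th:wccinexits} together with Lemmas~\ref{le:inexsuccits} and~\ref{le:inexsuccunsucc}; the task is thus to render $\kappa_k$ uniform in $k$ and to assemble the two estimates.

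First I would make the per-iteration bound independent of $k$. Writing $\kappa_k = 1 + \|H_k\|/\gamma_k$, Assumption~\ref{as:bH} gives $\|H_k\| \le M_H$ and the update rule of Algorithm~\ref{alg:gn} guarantees $\gamma_k \ge \gamma_{\min}$, so $\kappa_k \le 1 + M_H/\gamma_{\min} \le (M_H+\gamma_{\max}^{in})/\gamma_{\min} = \kappa$, where the last step uses the upper bound on $\gamma_k$ from Lemma~\ref{le:inexbdgamma}. Since $\kappa_k \ge 1$ and the map $t \mapsto \tfrac{1}{2}\sqrt{t}\,\log(2t/\theta)$ is nondecreasing for $t \ge 1$, the bound~\eqref{eq:inexCGk} is dominated by~\eqref{eq:inexCG}, so every CG solve costs at most $\min\{n,\tfrac{1}{2}\sqrt{\kappa}\log(2\kappa/\theta)\}$ matrix-vector products, uniformly in $k$.

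Next I would count the CG solves. A step is computed at Line~4 of Algorithm~\ref{alg:gn} at every iteration, successful or not, so the number of linear systems solved before termination is $1 + |\cS_{\epsg,\epsR}^{in}| + |\cU_{\epsg,\epsR}^{in}|$. Multiplying the uniform per-iteration cost by this count, then using Lemma~\ref{le:inexsuccunsucc} to bound $|\cU_{\epsg,\epsR}^{in}|$ by $\lceil 1 + \log_2\gamma_{\max}^{in}\rceil\,|\cS_{\epsg,\epsR}^{in}|$ and Lemma~\ref{le:inexsuccits} to bound $|\cS_{\epsg,\epsR}^{in}|$, yields the product displayed in~\eqref{eq:wccinexhvec} and, after simplification, its $\cO$ form.

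The only nonroutine part is tracking how the three $\theta$-dependent factors arise and combine. As $\theta \to 1$, $\gamma_{\max}^{in}$ grows like $1/(1-\theta^2)$; this single source feeds (i) the condition number $\kappa$, hence the $\sqrt{\kappa}\log(\kappa/\theta)$ factor in the per-iteration cost, (ii) the constant $\mathcal{C}_{\cS}^{in} = (M_H+\gamma_{\max}^{in})/(\eta(1-\theta^2))$, which together with the $\epsg^{-2}$ term produces the $(1-\theta^2)^{-2}$ dependence of $|\cS_{\epsg,\epsR}^{in}|$, and (iii) the factor $\log_2\gamma_{\max}^{in}$ from unsuccessful iterations, which contributes the extra $\log(1/(1-\theta^2))$. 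Verifying that these contributions appear exactly as in~\eqref{eq:wccinexhvec}, and that the logarithmic and polynomial $\theta$-dependencies are not double-counted, is where the argument requires genuine care; everything else is substitution into the already-established bounds.
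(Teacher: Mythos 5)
Your proposal is correct and follows essentially the same route as the paper: bound $\kappa_k \le \kappa = (M_H+\gamma_{\max}^{in})/\gamma_{\min}$ uniformly using Assumption~\ref{as:bH}, the lower bound $\gamma_k \ge \gamma_{\min}$ and Lemma~\ref{le:inexbdgamma}, then multiply the resulting uniform per-solve cost~\eqref{eq:inexCG} by the iteration count coming from Lemmas~\ref{le:inexsuccits}--\ref{le:inexsuccunsucc} and Theorem~\ref{th:wccinexits}. The only (harmless) discrepancy is that you count one CG solve per iteration, i.e.\ $1+|\cS_{\epsg,\epsR}^{in}|+|\cU_{\epsg,\epsR}^{in}|$ solves, whereas the displayed left-hand side of~\eqref{eq:wccinexhvec} carries only the factor $1+|\cS_{\epsg,\epsR}^{in}|$; your count is the one consistent with Algorithm~\ref{alg:gn} as written (a step is computed at every iteration, successful or not) and with the $\log(1/(1-\theta^2))$ factor already present in the stated $\cO$ bound.
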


As a final note, we point out that there exist variants of the conjugate 
gradient method that take advantage of a Gauss-Newton approximation 
$H_k=G_k^\T G_k$\rev{~\cite{CCPaige_MASaunders_1982,JNocedal_SJWright_2006}, 
and require two Jacobian-vector products per iteration instead of a full 
matrix-vector product with $H_k$. The corresponding complexity 
bound is worse than that of Corollary~\ref{co:wccinexhvec} by a factor of $2$,
but the cost of Jacobian-vector products can be much lower than that of 
matrix-vector products. Our implementation from the next section relies on this 
approach.}

\section{Numerical illustration}
\label{sec:num}

In this section, we illustrate the performance of several instances of our 
framework on classical  PDE-constrained optimization problems. Our goal is 
primarily to investigate the practical relevance of using 
condition~\eqref{eq:epspt} as a stopping criterion.
\rev{For this reason, we are mainly interested in the evaluation and 
iteration cost of our algorithm, and therefore report those statistics in the 
rest of the section.}

\rev{We implemented an inexact Gauss-Newton method based on 
Algorithm~\ref{alg:gn} in MATLAB R2023a. The method uses $\eta=0.1$, 
$\gamma_{\min}=10^{-10}$ and 
$\gamma_0=\max\{1,\|g_0\|,\|u_0\|_{\infty}+1\}$. Steps are computed inexactly 
using the conjugate gradient method so as to satisfy 
condition~\eqref{eq:inexsteps} from Assumption~\ref{as:inexsteps}. We compare 
five variants of our algorithm corresponding to 
$$
	\theta \in \{0,\ 10^{-6},\ 10^{-4},\ 10^{-2},\ 10^{-1}, 5.10^{-1}\}.
$$
The case $\theta=0$ corresponds to an exact variant, although the step is 
computed via an iterative linear algebra solver. More precisely, we use the 
\texttt{lsqr} function in MATLAB, which is mathematically 
equivalent to conjugate gradient applied to the subproblem~\eqref{eq:lssubpb} 
and has better numerical stability~\cite{CCPaige_MASaunders_1982}.}

Runs were completed on HP EliteBook x360 1040 G8 Notebook PC with 32Go RAM and 
8 cores 11th Gen Intel Core i7-1165G7 @ 2.80GHz.

\subsection{Elliptic PDE-constrained problem}
\label{subsec:elliptic}

We first consider a standard elliptic optimal control problem, where the 
control is chosen so that the temperature distribution (the state) matches 
a desired distribution as closely as possible~\cite{ESW14,Fredi_2010}. The 
resulting problem can be written as
\begin{equation}
\label{J1}
	\min_{y,u}J(y,u):= \frac{1}{2} \int_{\mathcal{D}} 
		\left[\left( y(u(x))-z(x)\right)^2 
		+ \lambda u(x)^2\right]\,dx,
\end{equation}
\begin{alignat}{2}
\label{J1a}
 \st \,\,\, \quad   -\nabla\cdot(a(x)\nabla y(x)) &= u(x),
 & &  \;\;\;\mbox{in}\;\;  \mathcal{D},\\\nonumber
 y(x) &= \rev{f}, & & \;\;\; \mbox{on}\;\; \partial\mathcal{D},
\end{alignat}
\rev{where  the desired state $z$,  the regularization parameter $\lambda>0$, 
and the function $f\in L^{\infty}(\mathcal{D})$  are all given. 
We set $\mathcal{D}:=[0,1]^2$, $\lambda=0.001$ and 
$a(x)\equiv 1, \;\forall x\in \mathcal{D}$.}
By discretizing~\eqref{J1} and~\eqref{J1a} using piecewise linear finite 
elements on a triangular grid, we arrive at the discretized formulation
\begin{equation*}
\label{discretecost}
	\begin{array}{ll}
	\min_{{\bf y}, {\bf u}} 
		&\frac{1}{2}({\bf y} -{\bf z})^{T} M ({\bf y} -{\bf z})  +\frac{\lambda}{2}{\bf u}^{T} M {\bf u},\\
		& \\
	\mbox{subject\ to} & K{\bf y} = M{\bf u} + \rev{{\bf f}},
	\end{array}
\end{equation*}
where the vectors ${\bf y}, {\bf z}, {\bf u} $ denote the discrete forms of 
the state, the desired state, respectively, and the control variables. 
\rev{The matrices $K$ and $M$ correspond to the stiffness matrix and mass 
matrix, respectively, while the vector ${\bf{f}}$ corresponds to the Dirichlet 
boundary conditions, respectively~\cite[Chapter 5]{ESW14}. Note that, in this 
example, we have set $f\equiv 0$.}

\rev{The} cost function~\eqref{discretecost} 
can be written as $\frac{1}{2}\|R({\bf y},{\bf u})\|^2$ with
\begin{equation}
\label{ellip}
	R({\bf y},{\bf u}) = 
	\begin{bmatrix}
		 M^{1/2} ({\bf y} - {\bf z}) \\
		\sqrt{\beta}M^{1/2} {\bf u}
	\end{bmatrix}, 
\end{equation}
fitting our formulation of interest~\eqref{eq:genpb}. \rev{We adopt 
this approach and apply Algorithm~\ref{alg:gn} to the 
formulation~\eqref{discretecost} with $n=1829$ (see, e.g., \cite{HAntil_SDolgov_AOnwunta_2023}). The 
linear PDE is solved using MATLAB's \texttt{pcg} method with the HSL\_MI20 
algebraic multigrid preconditioner~\cite{HSL_2023}. We use ${\bf u}_0 =  {\bf 1}$ 
and present results for two possible values of the desired state ${\bf z}$. For both 
choices, we report the number of PDE solves and Jacobian-vector products 
required by the methods to reach a point satisfying~\eqref{eq:epspt} within 
a computational budget of 300 iterations
\footnote{\rev{The algorithms stopped making progress after 300 iterations, thus 
we used this value as a baseline for our experiments.}}.
}

\rev{In Tables~\ref{tab:ellipticz0pde}--\ref{tab:ellipticz0jvp}, we 
provide the results for ${\bf z}={\bf 0}$. Note that in this case, case 
${\bf u}={\bf 0}$ gives a zero residual and 
the problem has a zero residual solution. Except in one case identified with 
a dash\footnote{\rev{In all tables, a dash indicates that the stopping condition was 
not satisfied within the computational budget.}}, all variants succeed in finding a 
point with small residual (in the sense of the tolerance $\epsR$) but never 
decrease the scaled gradient norm below the required tolerance $10^{-4}$. We 
observe that the number of PDE solves (or, equivalently, the number of iterations 
of Algorithm~\ref{alg:gn}) increases mildly as $\epsR$ decreases, as predicted by 
our complexity bounds that depend logarithmically on $\epsR$. Similarly, the choice 
for $\theta$ has a mild impact on the number of PDE solves, except when $\theta$ is chosen 
relatively close to $1$. In terms of Jacobian-vector products, however, the 
influence of $\theta$ is more noticeable, especially for the smallest and largest 
values for this parameter. Overall, these results both confirm the 
interest of our criterion~\eqref{eq:epspt} in presence of small 
residuals and the impact of inexactness on the evaluation complexity 
of the method.}

\begin{table}[ht!]
	\rev{
	\begin{center}
	\begin{tabular}{llllllll}\toprule
		$\epsR$ &$\epsg$ &\multicolumn{6}{c}{$\theta$} \\ \cmidrule{3-8}
		& &$0$ &$10^{-6}$ &$10^{-4}$ &$10^{-2}$ &$10^{-1}$ &$5.10^{-1}$\\ \hline
%
		$10^{-3}$ &$10^{-4}$ &26 &26 &26 &26 &26 &39 \\
		$10^{-6}$ &$10^{-4}$ &31 &31 &31 &31 &32 &231 \\
		$10^{-9}$ &$10^{-4}$ &33 &33 &33 &33 &34 &- \\
		\bottomrule
	\end{tabular}
	\end{center}
	\vspace*{0.5ex}
	\caption{\rev{Number of PDE solves for six variants of Algorithm~\ref{alg:gn} on 
	the elliptic PDE problem~\eqref{discretecost} using ${\bf z}={\bf 0}$ as 
	desired state.}}
	\label{tab:ellipticz0pde}
	}
\end{table}

\begin{table}[ht!]
	\rev{
	\begin{center}
	\begin{tabular}{llllllll}\toprule
		$\epsR$ &$\epsg$ &\multicolumn{6}{c}{$\theta$} \\ \cmidrule{3-8}
		& &$0$ &$10^{-6}$ &$10^{-4}$ &$10^{-2}$ &$10^{-1}$ &$5.10^{-1}$ \\ \hline
%
		$10^{-5}$ &$10^{-4}$ &441 &243 &173 &99  &57  &48\\
		$10^{-7}$ &$10^{-4}$ &646 &448 &370 &206 &133 &150\\
		$10^{-9}$ &$10^{-4}$ &728 &530 &452 &264 &177 &-\\
		\bottomrule
	\end{tabular}
	\end{center}
	\vspace*{0.5ex}
	\caption{\rev{Jacobian-vector products for six variants of Algorithm~\ref{alg:gn} on 
	the elliptic PDE problem~\eqref{discretecost} using ${\bf z}={\bf 0}$ as 
	desired state.}}
	\label{tab:ellipticz0jvp}
	}
\end{table}

\rev{We report the results for ${\bf z}={\bf 1}$ in 
Tables~\ref{tab:ellipticz1pde}--\ref{tab:ellipticz1jvp}. The problem now 
possesses large residuals. As a result, the scaled gradient 
condition is a better stopping criterion, and it is indeed triggered up 
to tolerance $\epsg=10^{-8}$ for an exact variant of the algorithm ($\theta=0$), 
even though the residual has not decreased below $10^{-2}$. Interestingly, we 
observe that both quantities of interest exhibit a worse dependency on $\epsg$ 
than on $\epsR$ in the previous tables, in that the number of iterations 
increases more rapidly as $\epsg$ decreases. Inexactness of the steps seems to 
increase the cost of reaching accuracies below $10^{-6}$ (note that the methods 
still reach a scaled gradient below $3.10^{-6}$ within $300$ iterations). The 
two observations are consistent with Corollary~\ref{co:wccinexPDE} and 
Corollary~\ref{co:wccinexhvec}, respectively.}

\begin{table}[ht!]
	\rev{
	\begin{center}
	\begin{tabular}{llllllll}\toprule
		$\epsR$ &$\epsg$ &\multicolumn{6}{c}{$\theta$} \\ \cmidrule{3-8}
		& &$0$ &$10^{-6}$ &$10^{-4}$ &$10^{-2}$ &$10^{-1}$ &$5.10^{-1}$\\ \hline
		$10^{-2}$ &$10^{-4}$ &24 &24 &24 &24 &24 &23 \\
		$10^{-2}$ &$10^{-6}$ &28 &28 &28 &- &- &- \\
		$10^{-2}$ &$10^{-8}$ &30 &30 &-  &- &- &- \\
		\bottomrule
	\end{tabular}
	\end{center}
	\vspace*{0.5ex}
	\caption{\rev{Number of PDE solves for six variants of Algorithm~\ref{alg:gn} on 
	the elliptic PDE problem~\eqref{discretecost} using ${\bf z}={\bf 1}$ as 
	desired state.}}
	\label{tab:ellipticz1pde}
	}
\end{table}

\begin{table}[ht!]
	\rev{
	\begin{center}
	\begin{tabular}{llllllll}\toprule
		$\epsR$ &$\epsg$ &\multicolumn{6}{c}{$\theta$} \\ \cmidrule{3-8}
		& &$0.00$ &$10^{-6}$ &$10^{-4}$ &$10^{-2}$ &$10^{-1}$ &$5.10^{-1}$ \\ \hline
		$10^{-2}$ &$10^{-4}$ &357 &157 &111 &59 &39 &22\\
		$10^{-2}$ &$10^{-6}$ &521 &281 &173 &- &- &-\\
		$10^{-2}$ &$10^{-8}$ &603 &331 &-   &- &- &-\\
		\bottomrule
	\end{tabular}
	\end{center}
	\vspace*{0.5ex}
	\caption{\rev{Jacobian-vector products for six variants of Algorithm~\ref{alg:gn} on 
	the elliptic PDE problem~\eqref{discretecost} using ${\bf z}={\bf 1}$ as 
	desired state.}}
	\label{tab:ellipticz1jvp}
	}
\end{table}

\subsection{Burgers' equation}
\label{subsec:burgers}

We now describe our second test problem based on Burgers' equation, a 
simplified model for turbulence~\cite{MMBaumann_2013,Reyes_Kunish_2004,
Fredi_Volkwein_2001,MHeinkenschloss_2018}. Control problems of this form are 
often considered as the most fundamental nonlinear problem to handle. In our 
case, they illustrate the performance of our algorithms in a 
nonlinear, implicitly constrained setting.

Our formulation is as follows:
\begin{equation}
\label{eq:controlpb}
	\left\{
	\begin{array}{lll}
		\min_{y,u}J(y,u):=
		&\frac{1}{2} \int_{0}^T \int_{0}^L 
		\left[\left( y(t,x)-z(t,x)\right)^2 
		+ \omega u(t,x)^2\right]\,dt\,dx
		& \\
		\st 
		& y_t  + \frac{1}{2}\left(y^2 + \nu y_x\right)_x  = f + u 
		&(x,t) \in (0,L) \times (0,T) \\
		&y(t,0) = y(t,L) = 0 
		&t \in (0,T) \\
		&y(0,x) = y_0(x) 
		&x \in (0,L).
	\end{array}
	\right.
\end{equation}
Here $L$ and $T$ are space and time horizons, respectively; 
$u: [0,T] \times [0,L] \rightarrow \R$ is the control of our optimization 
problem; $y: [0,T] \times [0,L] \rightarrow \R$ is the state; 
$z: [0,T] \times [0,L] \rightarrow \R$ is the desired state; $\omega>0$ 
is a regularization parameter; $f$ is a source term, and $\nu$ is 
the viscosity parameter. 

Given $u$, $y$ can be computed by solving the PDE 
\begin{equation}
\label{eq:burgerpde}
	\begin{array}{lll}
		y_t  + \frac{1}{2}\left(y^2 + \nu y_x\right)_x  &= f + u 
		&(x,t) \in (0,L) \times (0,T) \\
		y(t,0) = y(t,L) &= 0 
		&t \in (0,T) \\
		y(0,x) &= y_0(x) 
		&x \in (0,L).
	\end{array}
\end{equation}

We discretize~\eqref{eq:burgerpde} in time by applying the backward Euler scheme to 
Burgers' equation and a rectangle rule for the discretization of the objective 
function, while the spatial variable is approximated by piecewise linear 
finite elements. As a result, we obtain the following discretized version of 
problem~\eqref{eq:controlpb}:
\begin{equation}
\label{eq:discretepb}
	\left\{
	\begin{array}{ll}
		\minimize_{u_0,\dots,u_{N_t} \in \R^{N_x}} 
		&J(y_0,\dots,y_{N_t},u_0,\dots,u_{N_t}) 
		\\
		\st 
		& c_{i+1}(y_i,y_{i+1},u_{i+1};\nu)=0, \quad i=0,\dots,N_t-1, \\
	\end{array}
	\right.
\end{equation}
where 
\begin{equation}
\label{eq:discreteJ}
	J(y_0,\dots,y_{N_t},u_0,\dots,u_{N_t}) 
	:= \delta_t\sum_{i=0}^{N_t} \left(\tfrac{1}{2}(y_i-z)^\T M (y_i-z) 
	+ \tfrac{\omega}{2} u_i^\T M u_i \right)
\end{equation}
and
\begin{equation}
\label{eq:discretec}
	c_{i+1}(y_i,y_{i+1},u_{i+1};\nu) = 
	\frac{1}{\delta_t} M y_{i+1} - \frac{1}{\delta_t} M y_i 
	+ \frac{1}{2} B y_{i+1} \odot y_{i+1} + \nu C y_{i+1} - f - M u_{i+1},
\end{equation}
and $\odot$ denotes the entrywise product.
In those equations, $\delta_t = \frac{T}{N_t}$ represents the time step of 
the discretization, while $M,B,C,\{f_i\}$ are discretized versions of the 
operators and the source term arising from the continuous formulation. More 
precisely, we have
\[
	M = \frac{h}{6} 
	\begin{bmatrix}
	4 &1 & & & \\
	1 &4 &1 & & \\
	 &\ddots &\ddots &\ddots & \\
	 & &1 &4 &1 \\
	& & &1 &4
	\end{bmatrix} \in \R^{N_x \times N_x},
	\quad
	B = 
	\begin{bmatrix}
	0 &1/2 & & & \\
	-1/2 &0 &1/2 & & \\
	 &\ddots &\ddots &\ddots & \\
	 & &-1/2 &0 &1/2 \\
	& & &-1/2 &0
	\end{bmatrix}
	\in \R^{N_x \times N_x}
\]
and
\[
	C = 
	\frac{1}{h}
	\begin{bmatrix}
	2 &-1 & & & \\
	-1 &2 &-1 & & \\
	 &\ddots &\ddots &\ddots & \\
	 & &-1 &2 &-1 \\
	& & &-1 &2
	\end{bmatrix}
	\in \R^{N_x \times N_x},
	\quad
	f = 
	\begin{bmatrix}
	f_1 \\
	\vdots \\
	f_{N_x}
	\end{bmatrix}
	\in \R^{N_x},
\]
with $h=\tfrac{L}{N_x}$ being the space discretization step.
Following previous work~\cite{MMBaumann_2013,MHeinkenschloss_2018}, we 
assume that the desired state $z$ does not depend on time. 

To reduce the effects of boundary layers, we discretize Burgers' equation 
using continuous piecewise linear finite elements built on a piecewise uniform 
mesh. We then solve the resulting discretized nonlinear PDE at each time step 
using Newton's method~\cite{MMBaumann_2013}.

Letting ${\bf u}$ (resp. ${\bf y}$) as the concatenation of $u_0,\dots,u_{N_t}$ 
(resp. $y_0,\dots,y_{N_t}$), one observes that the objective function can be 
written as $\frac{1}{2}\|R({\bf y},{\bf u})\|^2$ with
\begin{equation}
\label{eq:discreteR}
	R({\bf y},{\bf u}) = 
	\begin{bmatrix}
		\sqrt{\delta_t} M^{1/2} (y_0 - z) \\
		\vdots \\
		\sqrt{\delta_t} M^{1/2} (y_{N_t} - z) \\
		\sqrt{\omega \delta_t} M^{1/2} u_0 \\
		\vdots \\
		\sqrt{\omega \delta_t} M^{1/2} u_{N_t}
	\end{bmatrix} 
	\in \R^{2(N_t +1)N_x}.
\end{equation}

In our experimental setup, we \rev{follow the setup in Troeltzsch and 
Volkwein~\cite{Fredi_Volkwein_2001} and} use $L=T=1$, $N_x=N_t=50$, $\omega=0.05$, and 
$f=0$. We set $z=y_0$ with the first $N_x/2$ coefficients equal to 
$1$ and the others equal to $0$, while the initial control $u_0$ is set to 
the zero vector. \rev{The resulting problem has large residuals.}

\begin{table}[ht!]
	\rev{
	\begin{center}
	\begin{tabular}{llllllll}\toprule
		$\epsR$ &$\epsg$ &\multicolumn{6}{c}{$\theta$} \\ \cmidrule{3-8}
		& &$0$ &$10^{-6}$ &$10^{-4}$ &$10^{-2}$ &$10^{-1}$ &$5.10^{-1}$ \\ \hline
		$10^{-2}$ &$10^{-3}$ &17 &17 &17 &17 &17 &16\\
		$10^{-2}$ &$10^{-4}$ &19 &19 &19 &19 &27 &23\\
		$10^{-2}$ &$10^{-6}$ &21 &21 &23 &39 &49 &43\\
		\bottomrule
	\end{tabular}
	\end{center}
	\vspace*{0.5ex}
	\caption{\rev{Number of PDE solves for six variants of Algorithm~\ref{alg:gn} for the 
	optimal control problem~\eqref{eq:controlpb} using $\nu=0.1$.}}
	\label{tab:burgersnu1em1pde}
	}
\end{table}

\begin{table}[ht!]
	\rev{
	\begin{center}
	\begin{tabular}{llllllllll}\toprule
		$\epsR$ &$\epsg$ &\multicolumn{6}{c}{$\theta$} \\ \cmidrule{3-8}
		& &$0$ &$10^{-6}$ &$10^{-4}$ &$10^{-2}$ &$10^{-1}$ &$5.10^{-1}$ \\ \hline
		$10^{-2}$ &$10^{-3}$ &232 &102 &76  &39 &28 &15\\			
		$10^{-2}$ &$10^{-4}$ &312 &128 &94  &54 &32 &18\\
		$10^{-2}$ &$10^{-6}$ &394 &152 &112 &62 &49 &28\\
		\bottomrule
	\end{tabular}
	\end{center}
	\vspace*{0.5ex}
	\caption{\rev{Jacobian-vector products for six variants of Algorithm~\ref{alg:gn} for the 
	optimal control problem~\eqref{eq:controlpb} using $\nu=0.1$.}}
	\label{tab:burgersnu1em1jvp}
	}
\end{table}

\rev{We first report results for the parameter choice $\nu=0.1$ in 
Tables~\ref{tab:burgersnu1em1pde}--\ref{tab:burgersnu1em1jvp}. For this problem, the 
number of PDE solves worsens significantly as $\theta$ increases, and more iterations 
are needed to correct inexactness in the steps. Meanwhile, the number of Jacobian-vector 
products seemingly decreases with $\theta$. This phenomenon is explained by the large 
number of iterations during which no iteration of conjugate gradient is performed, and 
the method essentially takes a gradient step, hence the slowdown of convergence observed 
on Table~\ref{tab:burgersnu1em1pde}.}

\begin{table}[ht!]
	\rev{
	\begin{center}
	\begin{tabular}{llllllll}\toprule
		$\epsR$ &$\epsg$ &\multicolumn{6}{c}{$\theta$} \\ \cmidrule{3-8}
		& &$0$ &$10^{-6}$ &$10^{-4}$ &$10^{-2}$ &$10^{-1}$ &$5.10^{-1}$ \\ \hline
		$10^{-2}$ &$10^{-3}$ &18 &18 &18  &18  &18  &26\\		
		$10^{-2}$ &$10^{-4}$ &21 &21 &21  &21  &38  &36\\
		$10^{-2}$ &$10^{-6}$ &27 &27 &50  &78  &96  &92\\
		\bottomrule
	\end{tabular}
	\end{center}
	\vspace*{0.5ex}
	\caption{\rev{Number of PDE solves for six variants of Algorithm~\ref{alg:gn} for the 
	optimal control problem~\eqref{eq:controlpb} using $\nu=0.01$.}}
	\label{tab:burgersnu1em2pde}
	}
\end{table}

\begin{table}[ht!]
	\rev{
	\begin{center}
	\begin{tabular}{llllllll}\toprule
		$\epsR$ &$\epsg$ &\multicolumn{6}{c}{$\theta$} \\ \cmidrule{3-8}
		& &$0$ &$10^{-6}$ &$10^{-4}$ &$10^{-2}$ &$10^{-1}$ &$5.10^{-1}$ \\ \hline
		$10^{-2}$ &$10^{-3}$ &331 &159 &111 &69  &39  &19\\		
		$10^{-2}$ &$10^{-4}$ &454 &222 &148 &82  &47  &24\\
		$10^{-2}$ &$10^{-6}$ &700 &304 &187 &107 &76  &52\\
		\bottomrule
	\end{tabular}
	\end{center}
	\vspace*{0.5ex}
	\caption{\rev{Jacobian-vector products for six variants of Algorithm~\ref{alg:gn} for the 
	optimal control problem~\eqref{eq:controlpb} using $\nu=0.01$.}}
	\label{tab:burgersnu1em2jvp}
	}
\end{table}

\rev{Similar observations can be made while choosing a smaller viscosity 
parameter, as illustrated in
Tables~\ref{tab:burgersnu1em2pde}--\ref{tab:burgersnu1em2jvp}. Calculation 
become more tedious as $\nu$ gets smaller, since the instability grows exponentially 
with the evolution time~\cite{Nguyen_2009}. We indeed observe an increase 
in both the number of PDE solves and the number of Jacobian-vector products 
required to reach the desired accuracies. Nevertheless, the results in the 
case $\nu=0.01$ follow a similar trend than those for $\nu=0.1$, thus 
agreeing with the theoretical bounds obtained in Corollary~\ref{co:wccinexPDE} 
(for the number of PDE solves) and Corollary~\ref{co:wccinexhvec} (for the 
number of Jacobian-vector products).}

\section{Conclusion}
\label{sec:conc}

In this paper, we proposed a regularization method for least-squares problems 
subject to implicit constraints, for which we derived complexity guarantees 
that improve over recent bounds derived in the absence of constraints. To this 
end, we leveraged a recently proposed convergence criterion that is 
particularly useful when the optimal solution corresponds to a nonzero objective 
value. Numerical testing conducted on PDE-constrained optimization problems 
showed that the criterion used to derive our complexity bounds bears a 
practical significance.

Our results can be extended in a number of \rev{research directions. Although 
we focused on deriving complexity bounds with objective and derivative 
evaluations as cost units, we believe that those bounds can be combined with 
complexity results on algebraic operations, in order to fully reflect the 
numerical cost of our framework. In addition, our study assumes access to
deterministic evaluations}. We plan on extending our framework to account for
uncertainty in either the objective or the constraints, so as to handle a 
broader range of problems.

\paragraph{Data availability statement} The code necessary to replicate the 
experiments performed in this paper, including the synthetic generation of 
the test problems, is available from the corresponding 
author upon reasonable request.

\rev{
\section*{\rev{Declarations}}
\paragraph{Funding} Funding for C. W. Royer's research 
was partially provided by \emph{Programme Gaspard Monge pour l'Optimisation} 
under the grant OCEAN, by \emph{Agence Nationale de la Recherche} through 
program ANR-19-P3IA-0001 (PRAIRIE 3IA Institute) and by a ``France 2030'' 
support managed by \emph{Agence Nationale de la Recherche} through program 
ANR-23-PEIA-0004 (PEPR PDE-AI). 
\paragraph{Employment} The authors have no affiliations with or involvement 
in any organization or entity with any financial interest or non-financial 
interest in the subject matter or materials discussed in this manuscript.
\paragraph{Financial interests} The authors have no relevant financial or 
non-financial interests to disclose.
}

\bibliographystyle{plain} 
\bibliography{refsLSIC}

\begin{thebibliography}{10}

\bibitem{HSL_2023}
{\em {HSL. A collection of Fortran codes for large scale scientific
  computation}}, November 2023.
\newblock http://www.hsl.rl.ac.uk/.

\bibitem{AAgarwal_NBoumal_BBullins_CCartis_2021}
A.~Agarwal, N.~Boumal, B.~Bullins, and C.~Cartis.
\newblock Adaptive regularization with cubics on manifolds.
\newblock {\em Math. Program.}, 188:85--134, 2021.

\bibitem{HAntil_SDolgov_AOnwunta_2023}
H.~Antil, S.~Dolgov, and A.~Onwunta.
\newblock {TTRISK}: {T}ensor train decomposition algorithm for risk averse
  optimization.
\newblock {\em Numer. Linear Algebra Appl.}, 30:e2481, 2023.

\bibitem{HAntil_DPKouri_MDLacasse_DRidzal_2016EDS}
H.~Antil, D.~P. {Khouri}, M.-D. {Lacasse}, and D.~Ridzal, editors.
\newblock {\em Frontiers in PDE-Constrained Optimization}, volume 163 of {\em
  The IMA Volumes in Mathematics and its Applications}.
\newblock Springer, New York, NY, USA, 2016.

\bibitem{CGBaker_PAAbsil_KAGallivan_2008}
C.~G. {Baker}, P.-A. {Absil}, and K.~A. {Gallivan}.
\newblock An implicit trust-region method on {R}iemannian manifolds.
\newblock {\em IMA J. Numer. Anal.}, 28:665--689, 2008.

\bibitem{MMBaumann_2013}
M.~M. Baumann.
\newblock {Nonlinear Model Order Reduction using POD/DEIM for Optimal Control
  of Burgers’ Equation}.
\newblock Master's thesis, Faculty of Electrical Engineering, Mathematics and
  Computer Science Delft Institute of Applied Mathematics, Delft University of
  Technology, 2013.

\bibitem{EBergou_YDiouane_VKungurtsev_2020}
E.~Bergou, Y.~Diouane, and V.~Kungurtsev.
\newblock Convergence and complexity analysis of a {L}evenberg-{M}arquardt
  algorithm for inverse problems.
\newblock {\em J. Optim. Theory Appl.}, 185:927--944, 2020.

\bibitem{EBergou_YDiouane_VKungurtsev_CWRoyer_2021}
E.~Bergou, Y.~Diouane, V.~Kungurtsev, and C.~W. Royer.
\newblock A nonmonotone matrix-free algorithm for nonlinear
  equality-constrained least-squares problems.
\newblock {\em SIAM J. Sci. Comput.}, 43:S743--S766, 2021.

\bibitem{EBergou_YDiouane_VKungurtsev_CWRoyer_2022}
E.~Bergou, Y.~Diouane, V.~Kungurtsev, and C.~W. Royer.
\newblock A stochastic {L}evenberg-{M}arquardt method for using random models
  with complexity results and application to data assimilation.
\newblock {\em SIAM/ASA J. Uncertain. Quantif.}, 10:507--536, 2022.

\bibitem{NBoumal_2023}
N.~Boumal.
\newblock {\em An introduction to optimization on smooth manifolds}.
\newblock Cambridge University Press, Cambridge, United Kingdom, 2023.

\bibitem{CCartis_NIMGould_PhLToint_2013d}
C.~Cartis, N.~I.~M. {Gould}, and Ph.~L. {Toint}.
\newblock On the evaluation complexity of cubic regularization methods for
  potentially rank-deficient nonlinear least-squares problems and its relevance
  to constrained nonlinear optimization.
\newblock {\em SIAM J. Optim.}, 23(3):1553--1574, 2013.

\bibitem{CCartis_NIMGould_PhLToint_2022}
C.~Cartis, N.~I.~M. {Gould}, and Ph.~L. Toint.
\newblock {\em Evaluation Complexity of Algorithms for Nonconvex Optimization:
  {T}heory, Computation and Perspectives}, volume MO30 of {\em MOS-SIAM Series
  on Optimization}.
\newblock SIAM, 2022.

\bibitem{FECurtis_DPRobinson_CWRoyer_SJWright_2021}
F.~E. {Curtis}, D.~P. {Robinson}, C.~W. {Royer}, and S.~J. {Wright}.
\newblock Trust-region {Newton-CG} with strong second-order complexity
  guarantees for nonconvex optimization.
\newblock {\em SIAM J. Optim.}, 31:518--544, 2021.

\bibitem{Reyes_Kunish_2004}
J.~C. {de los Reyes} and K.~Kunisch.
\newblock A comparison of algorithms for control constrained optimal control of
  the burgers equation.
\newblock {\em CALCOLO}, 41:203 -- 225, 2001.

\bibitem{ESW14}
H.~Elman, D.~Silvester, and A.~Wathen.
\newblock {\em Finite {E}lements and {F}ast {I}terative {S}olvers}, volume
  Second Edition.
\newblock Oxford University Press, 2014.

\bibitem{NIMGould_TRees_JAScott_2017}
N.~I.~M. {Gould}, T.~Rees, and J.~A. {Scott}.
\newblock A higher order method for solving nonlinear least-squares problems.
\newblock Technical Report RAL-TR-2017-010, STFC Rutherford Appleton
  Laboratory, 2017.

\bibitem{NIMGould_TRees_JAScott_2019}
N.~I.~M. {Gould}, T.~Rees, and J.~A. {Scott}.
\newblock Convergence and evaluation-complexity analysis of a regularized
  tensor-{N}ewton method for solving nonlinear least-squares problems.
\newblock {\em Comput. Optim. Appl.}, 73:1--35, 2019.

\bibitem{MHeinkenschloss_2018}
M.~Heinkenschloss.
\newblock {Lecture notes CAAM 454 / 554 – Numerical Analysis II}.
\newblock Rice University, Spring 2018.

\bibitem{MHeinkenschloss_DRidzal_2014}
M.~Heinkenschloss and D.~Ridzal.
\newblock A matrix-free trust-region {SQP} method for equality constrained
  optimization.
\newblock {\em SIAM J. Optim.}, 24:1507--1541, 2014.

\bibitem{MHinze_RPinnau_MUlbrich_SUlbrich_2009}
M.~Hinze, R.~Pinnau, M.~Ulbrich, and S.~Ulbrich.
\newblock {\em Optimization with {PDE} Constraints}.
\newblock Springer Dordrecht, 2009.

\bibitem{Nguyen_2009}
N.~C. {Nguyen}, G.~Rozza, and A.~T. {Patera}.
\newblock Reduced basis approximation and a posteriori error estimation for the
  time-dependent viscous {Burgers'} equation.
\newblock {\em Calcolo}, 46:157--185, 2009.

\bibitem{JNocedal_SJWright_2006}
J.~Nocedal and S.~J. {Wright}.
\newblock {\em Numerical {O}ptimization}.
\newblock Springer Series in Operations Research and Financial Engineering.
  Springer-Verlag, New York, second edition, 2006.

\bibitem{CCPaige_MASaunders_1982}
C.~C. {Paige} and M.~A. {Saunders}.
\newblock {LSQR}: {A}n algorithm for sparse linear equations and sparse least
  squares.
\newblock {\em ACM Trans. Math. Software}, 8:43--71, 1982.

\bibitem{CWRoyer_SJWright_2018}
C.~W. {Royer} and S.~J. {Wright}.
\newblock Complexity analysis of second-order line-search algorithms for smooth
  nonconvex optimization.
\newblock {\em SIAM J. Optim.}, 28:1448--1477, 2018.

\bibitem{LRuthotto_EHaber_2020}
L.~Ruthotto and E.~Haber.
\newblock Deep neural networks motivated by partial differential equations.
\newblock {\em J. Math. Imaging Vision}, 62:352--364, 2020.

\bibitem{Fredi_2010}
F.~{Tr\"{o}ltzsch}.
\newblock {\em Optimal {Control} of {Partial} {Differential} {Equations}:
  {Theory}, {Methods} and {Applications}}.
\newblock American Mathematical Society, 2010.

\bibitem{Fredi_Volkwein_2001}
F.~{Tr\"{o}ltzsch} and S.~{Volkwein}.
\newblock The {SQP} method for the control constrained optimal control of the
  {Burgers} equation.
\newblock {\em ESAIM: Control, Optimisation and Calculus of Variations}, 6:649
  -- 674, 2001.

\bibitem{MUlbrich_2011}
M.~Ulbrich.
\newblock {\em Semismooth {Newton} Methods for Variational Inequalities and
  Constrained Optimization Problems in Function Spaces}, volume MO11 of {\em
  MOS-SIAM Series on Optimization}.
\newblock SIAM, 2011.

\end{thebibliography}

\end{document}